\DeclareMathOperator*{\arginf}{arg\,inf}
\DeclareMathOperator*{\essinf}{ess\,inf}
\DeclareMathOperator*{\esssup}{ess\,sup}
\newtheorem{assumption}[lemma]{Assumption}
\newcommand{\etal}{\mbox{et al.}}
\def\qed{\hfill $\Box$}
\journalname{}
\begin{document}

\title{On the dynamic consistency of hierarchical risk-averse decision problems
}

\titlerunning{On the dynamic consistency of hierarchical risk-averse decision problems}        

\author{Getachew K. Befekadu \and Eduardo L. Pasiliao
}

\institute{G. K. Befekadu~ ({\large\Letter}\negthinspace) \at
	          National Research Council, Air Force Research Laboratory \& Department of Industrial System Engineering, University of Florida - REEF, 1350 N. Poquito Rd, Shalimar, FL 32579, USA. \\
	          \email{gbefekadu@ufl.edu}           
	          \and
	          E. L. Pasiliao \at
	          Munitions Directorate, Air Force Research Laboratory, 101 West Eglin Blvd, Eglin AFB, FL 32542, USA. \\
	          \email{pasiliao@eglin.af.mil}           
}

\date{Received: October 24, 2016 / Accepted: date}

\maketitle

\begin{abstract}
In this paper, we consider a risk-averse decision problem for controlled-diffusion processes, with dynamic risk measures, in which there are two risk-averse decision makers (i.e., {\it leader} and {\it follower}) with different risk-averse related responsibilities and information. Moreover, we assume that there are two objectives that these decision makers are expected to achieve. That is, the first objective being of {\it stochastic controllability} type that describes an acceptable risk-exposure set vis-\'{a}-vis some uncertain future payoff, and while the {\it second one} is making sure the solution of a certain risk-related system equation has to stay always above a given continuous stochastic process, namely {\it obstacle}. In particular, we introduce multi-structure, time-consistent, dynamic risk measures induced from conditional $g$-expectations, where the latter are associated with the generator functionals of two backward-SDEs that implicitly take into account the above two objectives along with the given continuous obstacle process. Moreover, under certain conditions, we establish the existence of optimal hierarchical risk-averse solutions, in the sense of viscosity solutions, to the associated risk-averse dynamic programming equations that formalize the way in which both the {\it leader} and {\it follower} consistently choose their respective risk-averse decisions. \,Finally, we remark on the implication of our result in assessing the influence of the {\it leader'}s decisions on the risk-averseness of the {\it follower} in relation to the direction of {\it leader-follower} information flow.

\keywords{Dynamic programming equation \and forward-backward SDEs \and hierarchical risk-averse decisions \and value functions \and viscosity solutions}
\end{abstract}
\section{Introduction}	 \label{S1}
Let $\bigl(\Omega, \mathcal{F},\{\mathcal{F}_t \}_{t \ge 0}, \mathbb{P}\bigr)$ be a probability space, and let $\{B_t\}_{t \ge 0}$ be a $d$-dimensional standard Brownian motion, whose natural filtration, augmented by all $\mathbb{P}$-null sets, is denoted by $\{\mathcal{F}_t\}_{t \ge 0}$, so that it satisfies the {\it usual hypotheses} (e.g., see \cite{Pro90} or \cite{GikS72}). \,We consider the following controlled-diffusion process over a given finite-time horizon $T>0$
\begin{align}
d X_t^{u,v} = m\bigl(t, X_t^{u,v}, (u_t,v_t)\bigr) dt + \sigma\bigl(t, X_t^{u,v}, (u_t,v_t)\bigr)dB_t, \notag \\
X_0^{u,v}=x, \quad  0 \le t \le T, \label{Eq1.1} 
\end{align}
where
\begin{itemize}
\item $X_{\cdot}^{u,v}$ is an $\mathbb{R}^{d}$-valued controlled-diffusion process,
\item $(u_{\cdot},v_{\cdot})$ is a pair of $(U \times V)$-valued measurable decision processes such that for all $t > s$, $(B_t - B_s)$ is independent of $(u_r, v_r)$ for $r \le s$ (nonanticipativity condition) and
\begin{align*}
\mathbb{E} \int_{s}^{t} \vert u_{\tau}\vert^2 d\tau < \infty \quad \text{and} \quad \mathbb{E} \int_{s}^{t} \vert v_{\tau}\vert^2 d\tau < \infty, \quad \forall t \ge s, 
\end{align*}
with $U$ and $V$ are open compact sets in $\mathbb{R}^{d}$, with $U \cap V = \varnothing$, 
\item $m \colon [0, T] \times \mathbb{R}^d \times (U \times V) \rightarrow \mathbb{R}^{d}$ is uniformly Lipschitz, with bounded first derivative, and
\item $\sigma \colon [0, T] \times \mathbb{R}^{d} \times (U \times V) \rightarrow \mathbb{R}^{d \times d}$ is Lipschitz with the least eigenvalue of $\sigma\,\sigma^T$ uniformly 
bounded away from zero for all $(x, (u,v)) \in \mathbb{R}^{d} \times (U \times V)$ and $t \in [0, T]$, i.e., 
\begin{align*}
 \sigma(t, x, (u,v))\,\sigma^T(t, x, (u, v)) \succeq \lambda I_{d \times d}, &\quad \forall (x, (u,v)) \in \mathbb{R}^{d} \times (U \times V),\\
  & \quad \forall t \in [0, T],  
\end{align*}
for some $\lambda > 0$.
\end{itemize}

{\bf Notation}: Let us introduce the following spaces that will be useful later in the paper.
\begin{itemize}
\item $L^2\bigl(\Omega, \mathcal{F}_t, \mathbb{P}; \mathbb{R}^{d} \bigr)$ is the set of $\mathbb{R}^{d}$-valued $\mathcal{F}_t$-measurable random variables $\xi$ such that $\bigl\Vert \xi \bigr\Vert^2 = \mathbb{E}\bigl\{\bigl\vert \xi \bigr\vert^2  \bigr\}< \infty$;
\item $L^{\infty}\bigl(\Omega, \mathcal{F}_t, \mathbb{P}\bigr)$ is the set of $\mathbb{R}$-valued $\mathcal{F}_t$-measurable random variables $\xi$ such that $\bigl\Vert \xi \bigr\Vert = \essinf \bigl\vert \xi \bigr \vert < \infty$;
\item $\mathcal{S}^2\bigl(t, T; \mathbb{R}^{d} \bigr)$ is the set of $\mathbb{R}^{d}$-valued adapted processes $\bigl (\varphi_s\bigr)_{t \le s \le T}$ on $\Omega \times [t, T]$ such that $\bigl\Vert \varphi \bigr\Vert_{[t, T]}^2 = \mathbb{E}\bigl\{\sup_{t \le s \le T} \bigl\vert \varphi_s \bigr\vert^2  \bigr\}< \infty$;
\item $\mathcal{H}^2\bigl(t, T; \mathbb{R}^{d} \bigr)$ is the set of $\mathbb{R}^{d}$-valued progressively measurable processes $\bigl (\varphi_s\bigr)_{t \le s \le T}$ such that $\bigl\Vert \varphi \bigr\Vert_{[t, T]}^2 = \mathbb{E}\bigl\{ \int_t^T \bigl\vert \varphi_s \bigr \vert^2 ds  \bigr\}< \infty$.
\end{itemize}

In this paper, we consider a risk-averse decision problem for the above controlled-diffusion process, in which there are two hierarchical decision makers (i.e., {\it leader} and {\it follower} with differing risk-averse related responsibilities and information) choose their decisions from progressively measurable strategy sets. That is, the {\it leader'}s decision $u_{\cdot}$ is a $U$-valued measurable control process from
\begin{align}
\mathcal{U}_{[0,T]} \triangleq \Bigl\{u\colon [0,T] \times \Omega \rightarrow U \,\Bigl\vert  u \,\, &\text{is an} \,\, \bigl\{\mathcal{F}_t\bigr\}_{t\ge 0}\text{- adapted} \notag \\
& \quad \text{and}\,\, \mathbb{E} \int_{0}^{T} \vert u_t\vert^2 dt < \infty \Bigr\}, \label{Eq1.3} 
\end{align}
and while the {\it follower'}s decision $v_{\cdot}$ is a $V$-valued measurable control process from 
\begin{align}
\mathcal{V}_{[0,T]} \triangleq \Bigl\{v\colon [0,T] \times \Omega \rightarrow V \, \Bigl\vert v \,\, &\text{is an} \,\, \bigl\{\mathcal{F}_t\bigr\}_{t\ge 0}\text{- adapted} \notag \\
& \quad  \text{and}\,\, \mathbb{E} \int_{0}^{T} \vert v_t\vert^2 dt < \infty \Bigr\}. \label{Eq1.4} 
\end{align}

Furthermore, we consider the following two cost functionals that provide information about the accumulated risk-costs on the time interval $[0, T]$ w.r.t. the {\it leader} and {\it follower}, i.e., 
\begin{align}
 \text{\it leader's risk-cost:} \quad  \xi_{0,T}^l(u,v) = \int_0^T c_l\bigl(t, X_t^{u,v}, u_t\bigr) dt + \Psi_l(X_T), \label{Eq1.5} 
\end{align}
and
\begin{align}
\text{\it follower's risk-cost:} \quad  \xi_{0,T}^f(u,v) = \int_0^T c_f\bigl(t, X_t^{u,v}, v_t\bigr) dt + \Psi_f(X_T), \label{Eq1.6} 
\end{align}
where $c_l \colon [0,T] \times \mathbb{R}^d \times V \rightarrow \mathbb{R}$ and $c_f \colon [0,T] \times \mathbb{R}^d \times W \rightarrow \mathbb{R}$ are measurable functions; and  $\Psi_l\colon \mathbb{R}^d \rightarrow \mathbb{R}$ and $\Psi_f\colon \mathbb{R}^d \rightarrow \mathbb{R}$ are also assumed measurable functions. 

Here, we remark that the corresponding solution $X_t^{u,v}$ in \eqref{Eq1.1} depends on the admissible risk-averse decision pairs $(u_{\cdot}, v_{\cdot}) \in \mathcal{U}_{[0,T]} \otimes \mathcal{V}_{[0,T]}$); and it also depends on the initial condition $X_0^{u,v}=x$.  As a result of this, for any time-interval $[t, T]$, with $t \in [0, T]$, the accumulated risk-costs $ \xi_{t,T}^1$ and $\xi_{t,T}^2$ depend on the risk-averse decisions $(u_{\cdot}, v_{\cdot}) \in \mathcal{U}_{[t,T]} \otimes \mathcal{V}_{[t,T]}$.\footnote{For any $t \in [0, T]$, $\mathcal{U}_{[t,T]}$ and \,$\mathcal{V}_{[t,T]}$ denote the sets of $U$- and $V$-valued $\bigl\{\mathcal{F}_s^t\bigr\}_{s \ge t}$-adapted processes, respectively (see Definition~\ref{Df2}).}  Moreover, we also assume that $f$, $\sigma$, $c_l$, $c_f$, $\Psi_l$ and $\Psi_f$, for $p \ge 1$, satisfy the following growth conditions  
\begin{align}
\bigl\vert m\bigl(t, x, (u, v)) \bigr\vert &+ \bigl\vert \sigma\bigl(t, x, (u,v) \bigr) \bigr\vert + \bigl\vert c_l\bigl(t, x, u\bigr) \bigr\vert + \bigl\vert \Psi_l\bigl(x\bigr) \bigr\vert \notag \\
 & \quad \le K \bigl(1 + \bigl\vert x \bigr\vert^p + \bigl\vert u \bigr\vert + \bigl\vert v \bigr\vert \bigr) \label{Eq1.7}
\end{align}
and
\begin{align}
\bigl\vert m\bigl(t, x, (u, v)) \bigr\vert &+ \bigl\vert \sigma\bigl(t, x, (u,v) \bigr) \bigr\vert + \bigl\vert c_f\bigl(t, x, v\bigr) \bigr\vert + \bigl\vert \Psi_f\bigl(x\bigr) \bigr\vert \notag \\
 & \quad \le K \bigl(1 + \bigl\vert x \bigr\vert^p + \bigl\vert u \bigr\vert + \bigl\vert v \bigr\vert \bigr), \label{Eq1.8}
 \end{align}
 for all $\bigl(t, x, (u,v) \bigr) \in [0,T] \times \mathbb{R}^{d} \times (U \times V)$ and for some constant $K > 0$.

On the same probability space $\bigl(\Omega, \mathcal{F},\{\mathcal{F}_t \}_{t \ge 0}, \mathbb{P}\bigr)$, we consider the following backward stochastic differential equation (BSDE) 
\begin{align}
- d Y_t = g\bigl(t, Y_t, Z_t\bigr) dt - Z_tdB_t, \quad Y_T=\xi, \label{Eq1.9}
\end{align}
where the terminal value $Y_T=\xi$ belongs to $L^2\bigl(\Omega, \mathcal{F}_T, \mathbb{P}; \mathbb{R}\bigr)$ and the generator functional $g \colon \Omega \times [0, T] \times \mathbb{R} \times \mathbb{R}^{d} \rightarrow \mathbb{R}$, with property that $\bigl(g\bigl(t, y, z\bigr)\bigr)_{0 \le t \le T}$ is progressively measurable for each $(y, z) \in \mathbb{R} \times \mathbb{R}^{d}$. We also assume that $g$ satisfies the following assumption.

\begin{assumption} \label{AS1}~\\\vspace{-5.0mm} 
\begin{enumerate} [{\rm (\ref{AS1}.1)}]
\item $g$ is Lipschitz in $(y, z)$, i.e., there exists a constant $K > 0$ such that, $\mathbb{P}$-a.s., for any $t \in [0, T]$, $y_1, y_2 \in \mathbb{R}$ and $z_1, z_2 \in \mathbb{R}^d$ 
\begin{align*}
\bigl\vert g\bigl(t, y_1, z_1\bigr) - g\bigl(t, y_2, z_2\bigr) \bigr\vert \le K \bigl(\bigl\vert y_1 - y_2 \bigr\vert + \bigl\Vert z_1 - z_2 \bigr\Vert\bigr).
\end{align*}
\item $g\bigl(t, 0, 0\bigr) \in \mathcal{H}^2\bigl(t, T; \mathbb{R} \bigr)$.
\item $\mathbb{P}$-a.s., for all $t \in [0, T]$ and $y \in \mathbb{R}$, $g\bigl(t, y, 0\bigr) = 0$.
\end{enumerate}
\end{assumption}
Then, we state the following lemma, which is used to establish the existence of a unique adapted solution (e.g., see \cite{ParP90} for additional discussions).
\begin{lemma} \label{L1}
Suppose that Assumption~\ref{AS1} holds. Then, for any $\xi \in L^2\bigl(\Omega, \mathcal{F}_T, \mathbb{P}; \mathbb{R}\bigr)$, the BSDE in \eqref{Eq1.9}, with terminal condition $Y_T=\xi$, i.e.,
\begin{align}
 Y_t = \xi + \int_t^T g\bigl(s, Y_s, Z_s\bigr) ds - \int_t^TZ_s dB_s, \quad 0 \le t \le T  \label{Eq1.10}
\end{align}
has a unique adapted solution
\begin{align}
 \bigl(Y_t^{T,g,\xi}, Z_t^{T,g,\xi}\bigr)_{0 \le t \le T} \in  \mathcal{S}^2\bigl(0, T; \mathbb{R} \bigr) \times  \mathcal{H}^2\bigl(0, T; \mathbb{R}^{d} \bigr). \label{Eq1.11}
\end{align}
\end{lemma}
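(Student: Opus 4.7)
My plan is to proceed by the standard Pardoux--Peng fixed-point argument (see \cite{ParP90}). First, I would equip the Banach space
\[
\mathcal{B} \triangleq \mathcal{H}^2\bigl(0,T;\mathbb{R}\bigr) \times \mathcal{H}^2\bigl(0,T;\mathbb{R}^d\bigr)
\]
with the weighted norm
\[
\bigl\|(y,z)\bigr\|_{\beta}^2 \triangleq \mathbb{E}\int_0^T e^{\beta t}\bigl(\vert y_t\vert^2 + \Vert z_t\Vert^2\bigr)\,dt,
\]
where $\beta>0$ is to be fixed later; since this norm is equivalent to the usual $\mathcal{H}^2$ norm, $(\mathcal{B},\|\cdot\|_\beta)$ remains complete.

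Next, I would define a map $\Phi \colon \mathcal{B} \to \mathcal{B}$ as follows. Given $(y,z)\in \mathcal{B}$, Assumption~\ref{AS1}(\ref{AS1}.1)--(\ref{AS1}.2) combined with the elementary bound $\vert g(s,y_s,z_s)\vert^2 \le 2\vert g(s,0,0)\vert^2 + 2K^2(\vert y_s\vert^2+\Vert z_s\Vert^2)$ place $(g(s,y_s,z_s))_{0 \le s \le T}$ in $\mathcal{H}^2(0,T;\mathbb{R})$, so $\xi + \int_0^T g(s,y_s,z_s)\,ds \in L^2(\Omega, \mathcal{F}_T,\mathbb{P};\mathbb{R})$. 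Applying It\^{o}'s martingale representation theorem to the square-integrable martingale $M_t = \mathbb{E}[\,\xi + \int_0^T g(s,y_s,z_s)\,ds \mid \mathcal{F}_t\,]$ yields a unique $Z\in\mathcal{H}^2(0,T;\mathbb{R}^d)$ with $M_t = M_0 + \int_0^t Z_s\,dB_s$. Setting
\[
Y_t \triangleq \mathbb{E}\Bigl[\xi + \int_t^T g(s,y_s,z_s)\,ds \,\Big|\, \mathcal{F}_t\Bigr] = M_t - \int_0^t g(s,y_s,z_s)\,ds,
\]
produces a continuous $\{\mathcal{F}_t\}$-adapted process that satisfies $Y_t = \xi + \int_t^T g(s,y_s,z_s)\,ds - \int_t^T Z_s\,dB_s$, and I define $\Phi(y,z) \triangleq (Y,Z)$; standard $L^2$-estimates ensure $\Phi(y,z)\in\mathcal{B}$.

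The central step is to show that $\Phi$ is a strict contraction on $(\mathcal{B},\|\cdot\|_\beta)$ for sufficiently large $\beta$. Given two inputs $(y^i,z^i)\in\mathcal{B}$, $i=1,2$, I set $(\bar Y,\bar Z) \triangleq \Phi(y^1,z^1)-\Phi(y^2,z^2)$ and $(\bar y,\bar z) \triangleq (y^1-y^2,z^1-z^2)$, then apply It\^{o}'s formula to $e^{\beta t}\vert \bar Y_t\vert^2$ on $[0,T]$. After taking expectations (the local-martingale term vanishes by the usual integrability argument), using the Lipschitz bound of Assumption~\ref{AS1}(\ref{AS1}.1) applied to $g(s,y^1_s,z^1_s)-g(s,y^2_s,z^2_s)$, and absorbing cross terms through Young's inequality $2ab \le \varepsilon a^2 + \varepsilon^{-1}b^2$, one obtains an estimate of the form
\[
\bigl\|\Phi(y^1,z^1) - \Phi(y^2,z^2)\bigr\|_{\beta}^2 \le \frac{C_K}{\beta}\,\bigl\|(y^1,z^1)-(y^2,z^2)\bigr\|_{\beta}^2,
\]
with $C_K$ depending only on the Lipschitz constant $K$. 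Choosing $\beta$ so that $C_K/\beta < 1$ makes $\Phi$ a strict contraction, and Banach's fixed-point theorem delivers a unique $(Y^\ast,Z^\ast)\in\mathcal{B}$ solving \eqref{Eq1.10}.

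Finally, to upgrade the $Y^\ast$-component to $\mathcal{S}^2(0,T;\mathbb{R})$, I would take suprema in \eqref{Eq1.10} and invoke the Burkholder--Davis--Gundy inequality to bound $\mathbb{E}[\sup_{0\le t\le T}\vert \int_t^T Z^\ast_s\,dB_s\vert^2]$ by a constant multiple of $\mathbb{E}\int_0^T \Vert Z^\ast_s\Vert^2\,ds < \infty$, then combine with Cauchy--Schwarz and the Lipschitz growth of $g$ to conclude $\mathbb{E}[\sup_{0\le t\le T}\vert Y^\ast_t\vert^2] < \infty$. The main obstacle I anticipate is the contraction inequality via It\^{o}'s formula: one must carefully track the $\beta$-weighted contributions from the drift, from the quadratic variation $\Vert \bar Z_s\Vert^2\,ds$, and from the cross terms, and verify that a single $\beta$ can simultaneously absorb every constant so that the resulting contraction coefficient is strictly less than one.
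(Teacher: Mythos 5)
Your proposal is correct and is precisely the classical Pardoux--Peng contraction argument (weighted $\mathcal{H}^2$ norm, martingale representation to build the map, It\^{o}'s formula on $e^{\beta t}\vert \bar Y_t\vert^2$ for the contraction, then Burkholder--Davis--Gundy to upgrade $Y$ to $\mathcal{S}^2$), which is exactly the route the paper relies on, since it states Lemma~\ref{L1} without proof and simply cites \cite{ParP90}. No gaps worth flagging; the $\beta$-bookkeeping you worry about works out in the standard way, e.g.\ absorbing the cross term via $2K\vert\bar Y_s\vert(\vert\bar y_s\vert+\Vert\bar z_s\Vert)\le \tfrac{\beta}{2}\vert\bar Y_s\vert^2+\tfrac{4K^2}{\beta}(\vert\bar y_s\vert^2+\Vert\bar z_s\Vert^2)$ and taking $\beta$ large.
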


Moreover, we recall the following comparison result that will be useful later (e.g., see \cite{ParT99}).
\begin{theorem}[Comparison Theorem] \label{T1}
Given two generators $g_1$ and $g_2$ satisfying Assumption~\ref{AS1} and two terminal conditions $\xi_1, \xi_2 \in L^2\bigl(\Omega, \mathcal{F}_T, \mathbb{P}; \mathbb{R}\bigr)$. Let $\bigl(Y_t^1, Z_t^1\bigr)$ and $\bigl(Y_t^2, Z_t^2\bigr)$ be the solution pairs corresponding to $\bigl(\xi_1, g_1\bigr)$ and $\bigl(\xi_2, g_2\bigr)$, respectively. Then, we have 
\begin{enumerate} [(i)]
\item Monotonicity: If $\xi_1 > \xi_2$ and $g_1 > g_2$, $\mathbb{P}$-a.s., then $Y_t^1 > Y_t^2$, $\mathbb{P}$-a.s., for all $t \in [0, T]$;
\item Strictly Monotonicity: In addition to ($i$) above, if we assume that $\mathbb{P}\bigl(\xi_1 > \xi_2\bigr) > 0$, then $\mathbb{P}\bigl(Y_t^1 > Y_t^2\bigr) > 0$, for all $t \in [0, T]$.
\end{enumerate}
\end{theorem}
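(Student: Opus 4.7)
The plan is to reduce the difference of the two solutions to a linear BSDE and then represent it via an explicit integral formula, from which both the monotonicity and the strict monotonicity follow directly. I set $\hat{Y}_t = Y_t^1 - Y_t^2$, $\hat{Z}_t = Z_t^1 - Z_t^2$, and $\hat{\xi} = \xi_1 - \xi_2$. Subtracting the two BSDEs of the form \eqref{Eq1.10} yields
\begin{align*}
- d \hat{Y}_t = \bigl[ g_1(t, Y_t^1, Z_t^1) - g_2(t, Y_t^2, Z_t^2)\bigr] dt - \hat{Z}_t\, dB_t, \quad \hat{Y}_T = \hat{\xi}.
\end{align*}
The first key step is to linearise the driver difference by the standard telescoping trick
\begin{align*}
g_1(t, Y_t^1, Z_t^1) - g_2(t, Y_t^2, Z_t^2) &= \bigl[g_1(t, Y_t^1, Z_t^1) - g_1(t, Y_t^2, Z_t^1)\bigr] \\
&\quad + \bigl[g_1(t, Y_t^2, Z_t^1) - g_1(t, Y_t^2, Z_t^2)\bigr] \\
&\quad + \bigl[g_1(t, Y_t^2, Z_t^2) - g_2(t, Y_t^2, Z_t^2)\bigr].
\end{align*}
Using the Lipschitz assumption (\ref{AS1}.1), the first bracket can be written as $\alpha_t\hat{Y}_t$ and the second as $\beta_t\cdot\hat{Z}_t$, where $(\alpha_t)$ and $(\beta_t)$ are $\{\mathcal{F}_t\}$-progressively measurable bounded processes (coming from the mean-value type representations). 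Denoting the last bracket by $\gamma_t := g_1(t, Y_t^2, Z_t^2) - g_2(t, Y_t^2, Z_t^2)$, which is nonnegative by assumption, one arrives at the linear BSDE
\begin{align*}
- d \hat{Y}_t = \bigl[\alpha_t \hat{Y}_t + \beta_t \cdot \hat{Z}_t + \gamma_t\bigr] dt - \hat{Z}_t\, dB_t, \quad \hat{Y}_T = \hat{\xi}.
\end{align*}

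The next step is to remove the $\beta_t\cdot \hat{Z}_t$ term by a Girsanov change of measure: since $(\beta_t)$ is bounded, the Dol\'{e}ans--Dade exponential $\mathcal{E}_t = \exp\bigl(\int_0^t \beta_s \cdot dB_s - \tfrac{1}{2}\int_0^t |\beta_s|^2 ds\bigr)$ is a genuine martingale, and under the equivalent measure $\mathbb{Q}$ with density $\mathcal{E}_T$ the process $\tilde{B}_t = B_t - \int_0^t \beta_s\, ds$ is a Brownian motion. The $\beta$-term is absorbed, and then an integrating factor $\Phi_{t,s} = \exp\bigl(\int_t^s \alpha_r\, dr\bigr)$ handles the linear $\alpha_t\hat{Y}_t$ term: applying It\^{o}'s formula to $\Phi_{t,s}\hat{Y}_s$ under $\mathbb{Q}$ and taking conditional expectation produces the explicit representation
\begin{align*}
\hat{Y}_t = \mathbb{E}^{\mathbb{Q}}\!\left[\Phi_{t,T}\,\hat{\xi} + \int_t^T \Phi_{t,s}\,\gamma_s\, ds \;\Bigl\vert\; \mathcal{F}_t\right].
\end{align*}

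From this representation the monotonicity claim (i) is immediate: both $\hat{\xi}\ge 0$ and $\gamma_s\ge 0$, together with $\Phi_{t,s}>0$ and equivalence of $\mathbb{Q}$ with $\mathbb{P}$, give $\hat{Y}_t \ge 0$, $\mathbb{P}$-a.s., for every $t\in [0, T]$. The strict monotonicity (ii) then follows because $\Phi_{t,T}>0$ is strictly positive pathwise, so $\mathbb{P}(\hat{\xi}>0)>0$ implies $\mathbb{Q}(\Phi_{t,T}\hat{\xi}>0)>0$, and consequently the nonnegative conditional expectation is strictly positive on a set of positive $\mathbb{P}$-measure. I expect the main technical obstacle to lie in justifying the Girsanov change of measure and the use of the mean-value representations for $\alpha_t, \beta_t$ when $g_1$ is only Lipschitz rather than $C^1$; this is handled by choosing the measurable selections
\begin{align*}
\alpha_t = \frac{g_1(t, Y_t^1, Z_t^1) - g_1(t, Y_t^2, Z_t^1)}{\hat{Y}_t}\,\mathbf{1}_{\{\hat{Y}_t\ne 0\}},
\end{align*}
with an analogous componentwise construction for $\beta_t$, both of which are bounded by the Lipschitz constant $K$ of Assumption~\ref{AS1} and hence yield a bounded, square-integrable Girsanov exponent on the finite horizon $[0,T]$.
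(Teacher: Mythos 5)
Your argument is correct, and it is essentially the classical proof: the paper itself does not prove Theorem~\ref{T1} at all --- it simply recalls the statement with a citation (Pardoux--Tang, and the same argument appears in El Karoui--Peng--Quenez) --- and the linearization of the driver difference into $\alpha_t\hat{Y}_t+\beta_t\cdot\hat{Z}_t+\gamma_t$, followed by the Girsanov removal of the $\beta$-term and the integrating factor $\Phi_{t,s}$, is exactly the standard route taken in those references. Two small points you should tighten: in part (i) the statement (with its strict a.s.\ hypotheses $\xi_1>\xi_2$, $g_1>g_2$) asserts $Y_t^1>Y_t^2$ a.s., whereas you only conclude $\hat{Y}_t\ge 0$; your representation in fact gives strictness directly, since the conditional expectation of the a.s.\ strictly positive variable $\Phi_{t,T}\hat{\xi}$ is a.s.\ strictly positive under the equivalent measure $\mathbb{Q}$. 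Also, when you take conditional expectations after applying It\^{o} to $\Phi_{t,s}\hat{Y}_s$ under $\mathbb{Q}$, you should say a word about why $\int\Phi_{t,s}\hat{Z}_s\,d\tilde{B}_s$ is a true $\mathbb{Q}$-martingale (boundedness of $\alpha,\beta$ plus $\hat{Z}\in\mathcal{H}^2$ and the moment bounds on the Dol\'{e}ans--Dade density, or a localization argument); this is routine but is the only genuinely technical step in the proof.
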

In the following, we give the definition for a dynamic risk measure that is associated with the generator of BSDE in \eqref{Eq1.9}.

\begin{definition} \label{Df1} 
For any $\xi \in L^2\bigl(\Omega, \mathcal{F}_T, \mathbb{P}; \mathbb{R}\bigr)$, let $\bigl(Y_t^{T,g,\xi}, Z_t^{T,g,\xi}\bigr)_{0 \le t \le T} \in  \mathcal{S}^2\bigl(0, T; \mathbb{R} \bigr) \times  \mathcal{H}^2\bigl(0, T; \mathbb{R}^{d} \bigr)$ be the unique solution for the BSDE in \eqref{Eq1.9} with terminal condition $Y_T=\xi$. Then, we define the dynamic risk measure $\rho_{t,T}^g$ of $\xi$ by\footnote{Here, we remark that, for any $t \in [0,T]$, the conditional $g$-expectation (denoted by $\mathcal{E}_g\bigl[\xi \vert \mathcal{F}_t\bigr]$) is also defined by
\begin{align*}
\mathcal{E}_g\bigl[\xi \vert \mathcal{F}_t\bigr] \triangleq Y_t^{T,g,\xi}.
\end{align*}} 
\begin{align}
\rho_{t,T}^g \bigl[\xi \bigr] \triangleq Y_t^{T,g,\xi}.  \label{Eq1.12}
\end{align}
\end{definition}

\begin{remark} \label{R1.1}
Note that such a risk measure is widely used for evaluating the risk of uncertain future outcomes, and also assisting with stipulating minimum interventions required by financial institutions for risk management (e.g., see \cite{ArtDEH99}, \cite{Pen04}, \cite{ElKPQ97}, \cite{FolS02}, \cite{DetS05} or \cite{CorHMP02} for related discussions). In Section~\ref{S2}, we use multi-structure, time-consistent, dynamic risk measures induced from conditional $g$-expectations, where the latter are associated with the generator functionals of two backward-SDEs that implicitly take into account the cost functionals of the {\it leader} and {\it follower} along with the given continuous obstacle process; and we provide a hierarchical framework for the risk-averse decision problem for the controlled-diffusion process.
\end{remark}

Moreover, if the generator functional $g$ satisfies Assumption~\ref{AS1}, then a family of time-consistent dynamic risk measures $\bigl\{\rho_{t,T}^g\bigr\}_{t \in [0,T]}$ has the following properties (see \cite{Pen04} for additional discussions).
 
\begin{property} \label{Pr1} ~ \\ \vspace{-5.0mm}
\begin{enumerate} [{\rm (p1)}]
\item {\it Convexity}: If $g$ is convex for every fixed $(t, \omega) \in [0, T] \times \Omega$, then for all $\xi_1, \xi_2 \in L^2\bigl(\Omega, \mathcal{F}_T, \mathbb{P}; \mathbb{R}\bigr)$ and for all $\lambda \in L^{\infty}\bigl(\Omega, \mathcal{F}_t, \mathbb{P}; \mathbb{R}\bigr)$ such that $0 \le \lambda \le 1$
\begin{align*}
\rho_{t,T}^g\bigl[\lambda \xi_1 + (1-\lambda)\xi_2 \bigr] \le \lambda \rho_{t,T}^g\bigl[\xi_1\bigr] + (1- \lambda) \rho_{t,T}^g\bigl[\xi_1\bigr];
\end{align*}
\item {\it Monotonicity}:  For $\xi_1, \xi_2 \in L^2\bigl(\Omega, \mathcal{F}_T, \mathbb{P}; \mathbb{R}\bigr)$ such that $\xi_1 > \xi_2$ $\mathbb{P}$-a.s., then 
\begin{align*}
\rho_{t,T}^g\bigl[\xi_1\bigr] > \rho_{t,T}^g\bigl[\xi_2\bigr], \quad \mathbb{P}{\text-a.s.};
\end{align*}
\item {\it Trans-invariance}: For all $\xi \in L^2\bigl(\Omega, \mathcal{F}_T, \mathbb{P}; \mathbb{R}\bigr)$ and $\nu \in L^2\bigl(\Omega, \mathcal{F}_t, \mathbb{P}; \mathbb{R}\bigr)$
\begin{align*}
\rho_{t,T}^g\bigl[\xi + \nu\bigr] = \rho_{t,T}^g\bigl[\xi\bigr] + \nu;
\end{align*}
\item {\it Positive-homogeneity}: For all $\xi \in L^2\bigl(\Omega, \mathcal{F}_T, \mathbb{P}; \mathbb{R}\bigr)$ and for all $\lambda \in L^{\infty}\bigl(\Omega, \mathcal{F}_t, \mathbb{P}; \mathbb{R}\bigr)$ such that $\lambda > 0$
\begin{align*}
\rho_{t,T}^g\bigl[\lambda \xi \bigr] = \lambda \rho_{t,T}^g\bigl[\xi\bigr];
\end{align*}
\item {\it Normalization}: $\rho_{t,T}^g\bigl[0\bigr] = 0$ for $t \in [0, T]$.
\end{enumerate}
\end{property}

\begin{remark} \label{R1.2}
Note that, since the seminal work of Artzner \etal~\cite{ArtDEH99}, there have been studies on axiomatic dynamic risk measures, coherency and consistency in the literature (e.g., see \cite{DetS05}, \cite{Pen04}, \cite{Ros06}, \cite{FolS02} or \cite{CorHMP02}). Particularly relevant for us are, time-consistent, dynamic risk measures, induced from conditional $g$-expectations associated with generator functionals of BSDEs satisfying the above properties {\rm (p1)}--{\rm (p5)}.
\end{remark}

Here it is worth mentioning that some interesting studies on the dynamic risk measures, based on the conditional $g$-expecations, have been reported in the literature (e,g. see \cite{Pen04}, \cite{CorHMP02} and \cite{Ros06} for establishing connection between the risk measures and the generator of BSDE; and see also \cite{RSta10} for characterizing the generator of BSDE according to different risk measures). Recently, the authors in \cite{Rus10} and \cite{BefVP15} have provided interesting results on the risk-averse decision problem for Markov decision processes, in discrete-time setting, and, respectively, a hierarchical risk-averse framework for controlled-diffusion processes. Note that the rationale behind our framework follows in some sense the settings of these papers. However, to our knowledge, the problem of dynamic consistent risk-aversion for controlled-diffusion processes has not been addressed in the context of hierarchical argument, and it is important because it provides a mathematical framework that shows how a such framework can be systematically used to obtain optimal risk-averse decisions.

The remainder of this paper is organized as follows. In Section~\ref{S2}, using the basic remarks made in Section~\ref{S1}, we state the hierarchical risk-averse decision problem for the controlled-diffusion process. In Section~\ref{S3}, we present our main results -- where we introduce a framework under which the {\it follower} is required to {\it respond optimally} to the risk-averse decision of the {\it leader} so as to achieve an overall consistent risk-averseness. Moreover, we establish the existence of optimal risk-averse solutions, in the sense of viscosity solutions, to the associated risk-averse dynamic programming equations. Finally, Section~\ref{S4} provides further remarks.

\section{The hierarchical risk-averse decision problem formulation} \label{S2}
In order to make our hierarchical formulation more precise, we further assume following.
\begin{assumption} \label{AS2}~\\\vspace{-5.0mm} 
\begin{enumerate} [{\rm (\ref{AS2}.1)}]
\item $g \colon [0, T] \times \Omega \times \mathbb{R} \times \mathbb{R}^d \rightarrow \mathbb{R}$ is a measurable function that satisfies Assumption~\ref{AS1},
\item $\xi^{Target}$ is a real-valued random variable from $L^2(\Omega, \mathcal{F}_T, \mathbb{P}; \mathbb{R})$,
\item $h \colon [0, T] \times \mathbb{R}^{d} \rightarrow \mathbb{R}$ is jointly continuous in $t$ and $x$; and satisfying
\begin{align}
 h(t, x) \le K(1 +\vert x \vert^p), \quad  (t, x) \in [0, T] \times \mathbb{R}^{d},  \label{Eq2.1}
\end{align}
$h(t, x) \le \Psi_l(x)$ for $x \in \mathbb{R}^{d}$ and for some constant $K >0$,
\item an ``obstacle" $\bigl\{L_t, 0 \le t \le T \bigr\}$, which is continuous progressively real-valued process satisfying
\begin{align}
 \mathbb{E}\Bigl\{ \sup_{0 \le t \le T} (L_t^{+})^2 \Bigr\} < \infty.  \label{Eq2.2}
\end{align}
\end{enumerate}
\end{assumption}
Then, for any $(t, x) \in [0, T] \times \mathbb{R}^d$, we consider the following forward-SDE with an initial condition $X_t^{t,x;w} = x$
\begin{align}
d X_s^{t,x;w} = m\bigl(t, X_s^{t,x;w}, (u_s, v_s)\bigr) ds + \sigma\bigl(s, X_s^{t,x;w}, (u_s, v_s)\bigr)dB_s, \,\,  t \le s \le T,  \label{Eq2.3}
\end{align}
where $w_{\cdot} \triangleq (u_{\cdot}, v_{\cdot})$ is a pair of $(U, V)$-valued measurable decision processes. Furthermore, we also suppose that the data $(\xi^{Target}, L)$ take the following forms
\begin{align}
 \xi^{Target} = \Psi_l(X_T^{t,x;w}) \quad \text{and} \quad L_s = h(s, X_s^{t,x;w}).  \label{Eq2.4}
 \end{align}
Moreover, we introduce the following two risk-value functions w.r.t. the {\it leader} and {\it follower}, i.e.,
\begin{align}
 \text{\it leader:} &\quad  V_l^{u}\bigl(t, x\bigr) = \rho_{t, T}^{g_l} \bigl[\xi_{t,T}^l\bigl(u, v\bigr)\bigr],  \,\, \text{such that, for a given} \,\, \hat{u}_{\cdot} \in \mathcal{U}_{[t,T]},\notag \\
   &  \quad  \exists \hat{v}_{\cdot} \in \Bigl\{\tilde{v}_{\cdot} \in \mathcal{V}_{[t,T]} \,\Bigl\vert\, \rho_{t, T}^{g_f} \bigl[\xi_{t,T}^f\bigl(\hat{u}, \tilde{v}\bigr)\bigl] \le \rho_{t, T}^{g_f} \bigl[\xi_{t,T}^f\bigl(\hat{u}, v\bigr)\bigl], \,\, \forall v_{\cdot} \in \mathcal{V}_{[t,T]},  \notag \\
   & \hspace{0.35in} \quad \quad \quad\quad \quad  \rho_{t, T}^{g_l} \bigl[\xi_{t,T}^l\bigl(\hat{u}, \tilde{v}\bigr)\bigr] \ge L_t, \quad \mathbb{P}-a.s. \Bigr\}, \label{Eq2.5}
\end{align}
where
\begin{align}
\xi_{t,T}^l\bigl(v, w\bigr) = \int_t^T c_l\bigl(s, X_s^{t,x;w}, v_s\bigr) ds + \Psi_l(X_T^{t,x;w}) \label{Eq2.6}
\end{align}
and similarly
\begin{align}
\text{\it follower:} \quad V_f^{v}\bigl(t, x\bigr) = \rho_{t, T}^{g_f} \bigl[\xi_{t,T}^f\bigl(u,v\bigr)\bigr],  \hspace{2in}   \label {Eq2.7}
\end{align}
where
\begin{align}
\xi_{t,T}^f \bigl(v,w\bigr) = \int_t^T c_f\bigl(s, X_s^{t,x;w}, w_s\bigr) ds + \Psi_f(X_T^{t,x;w}). \label{Eq2.8}
\end{align}
Taking into account Assumption~\ref{AS2} (and with Markovian risk-averse decisions), we can express the above two risk-value functions using reflected- and standard-BSDE as follows
\begin{align}
 V_l^{u}\bigl(t, x\bigr) &\triangleq \hat{Y}_s^{t,x;w} \notag \\
                                   &= \Psi_l(X_T^{t,x;w}) + \int_t^T g_l\bigl(s, X_s^{t,x;w}, \hat{Y}_s^{t,x;w}, \hat{Z}_s^{t,x;w} \bigr) ds \notag \\
                                                                      & \quad\quad + A_T^{t,x;w} - A_s^{t,x;w} - \int_t^T \hat{Z}_s^{t,x;w}dB_s, \label{Eq2.9}
\end{align}
where $\bigl\{ A_s^{t,x;w}\bigr\}$ is increasing and continuous, and
\begin{align*}
\int_t^T \bigl(\hat{Y}_s^{t,x;w} - h(s, X_s^{t,x;w}) \bigr) d A_s^{t,x;w} = 0,
\end{align*}
with $L_s = h(s, X_s^{t,x;w})$, and 
\begin{align*}
g_l\bigl(t, X_s^{t,x;w}, \hat{Y}_s^{t,x;w}, \hat{Z}_s^{t,x;w}\bigr) = c_l\bigl(s, X_s^{t,x;w}, u_s\bigr) + g\bigl(s, \hat{Y}_s^{t,x;w} \hat{Z}_s^{t,x;w}\bigr) 
\end{align*}
and
\begin{align}
 V_f^{v}\bigl(t, x\bigr) &\triangleq \tilde{Y}_s^{t,x;w} \notag \\
                                 &= \Psi_f(X_T^{t,x;w}) + \int_t^T g_f\bigl(s, X_s^{t,x;w} \tilde{Y}_s^{t,x;w}, \tilde{Z}_s^{t,x;w}\bigr) \Bigr\}ds \notag \\
                                                                      & \quad \quad \quad \quad - \int_t^T \tilde{Z}_s^{t,x;w}dB_s, \label{Eq2.10} 
\end{align}
where 
\begin{align*}
g_f\bigl(t, X_s^{t,x;w}, \tilde{Y}_s^{t,x;w}, \tilde{Z}_s^{t,x;w}\bigr) = c_f\bigl(s, X_s^{t,x;w}, v_s\bigr) + g\bigl(s, \tilde{Y}_s^{t,x;w}, \tilde{Z}_s^{t,x;w}\bigr). 
\end{align*}

Noting the conditions in \eqref{Eq1.7} and \eqref{Eq1.8} (see also Remark~\ref{R2.1}), then $\bigl(\hat{Y}_s^{t,x;w}, \hat{Z}_s^{t,x;w}, A_s^{t,x;w}\bigr)_{t \le s \le T}$ \,and \,$\bigl(\tilde{Y}_s^{t,x;w}, \tilde{Z}_s^{t,x;w}\bigr)_{t \le s \le T}$ are adapted solutions on $[t, T] \times \Omega$ and belong to $\mathcal{S}^2\bigl(t, T; \mathbb{R} \bigr) \times  \mathcal{H}^2\bigl(t, T; \mathbb{R}^{d} \bigr) \times \mathcal{S}^2\bigl(t, T; \mathbb{R}\bigr)$.
 
\begin{remark} \label{R2.1}
Here, it is worth remarking that, for a given continuous progressively real-valued process $\bigl\{L_t, 0 \le t \le T \bigr\}$ satisfying \eqref{Eq2.2} and for each $(t, x) \in [0, T] \times \mathbb{R}^d$, if there exists a unique triple $\bigl(\hat{Y}^{t,x;w}, \hat{Z}^{t,x;w}, A^{t,x;w}\bigr)$ of $\bigl\{\mathcal{F}_s^t\bigr\}$ progressively measurable processes, which solves the reflected-BSDE in \eqref{Eq2.9}. Then, this is equivalent to solve 
\begin{align*}
\bigl(\bar{Y}^{t,x;w}, \bar{Z}^{t,x;w}\bigr) = \bigl(\hat{Y}^{t,x;w} + A^{t,x;w}, \hat{Z}^{t,x;w}\bigr) \in \mathcal{S}^2\bigl(t, T; \mathbb{R} \bigr) \times  \mathcal{H}^2\bigl(t, T; \mathbb{R}^{d} \bigr) 
\end{align*}
of the following standard-BSDE
\begin{align*}
  \bar{Y}_s^{t,x;w} &= \Psi_l(X_T^{t,x;w}) + \int_t^T g_l\bigl(s, X_s^{t,x;w}, \bar{Y}_s^{t,x;w}, \bar{Z}_s^{t,x;w} \bigr) ds - \int_t^T \bar{Z}_s^{t,x;w}dB_s.
\end{align*}
\end{remark}

In what follows, we introduce a hierarchical framework that requires a certain level of risk-averseness be achieved for the {\it leader} as a priority over that of the {\it follower}. For example, suppose that the risk-averse decision for the {\it leader} $\hat{u}_{\cdot} \in \mathcal{U}_{[t,T]}$ is given. Then, the problem of finding an optimal risk-averse decision for the {\it follower}, i.e., $\hat{v}_{\cdot} \in \mathcal{V}_{[t,T]}$, which minimizes the accumulated risk-cost under $v$ is then reduced to finding an optimal risk-averse solution for
\begin{align}
\inf_{v_{\cdot} \in \mathcal{V}_{[t,T]}}  J_f\bigr[\bigl(\hat{u}, v\bigr)\bigl], \label{Eq2.11}
\end{align}
where 
\begin{align}
J_f\bigr[\bigl(\hat{u}, v\bigr)\bigl] = \rho_{t, T}^{g_f} \bigl[\xi_{t,T}^f\bigl(\hat{u},v\bigr)\bigr]. \label{Eq2.12}
\end{align}
Note that, for a given $\hat{u}_{\cdot} \in \mathcal{U}_{[t,T]} $, if the forward-backward stochastic differential equations (FBSDEs) in \eqref{Eq2.3}, \eqref{Eq2.10} and the reflected-BSDE in \eqref{Eq2.9} admit unique solutions, then we have
\begin{align}
  \hat{v}_{\cdot} \triangleq F(\hat{u}_{\cdot})  \in \Bigl\{\tilde{v}_{\cdot} \in \mathcal{V}_{[t,T]} \,\Bigl\vert\, \rho_{t, T}^{g_f} \bigl[\xi_{t,T}^f\bigl(\hat{u}, \tilde{v}\bigr)\bigl] \le \rho_{t, T}^{g_f} \bigl[\xi_{t,T}^f\bigl(\hat{u}, v\bigr)\bigl], \,\, \forall v_{\cdot} \in \mathcal{V}_{[t,T]},  \notag \\
   \quad  \rho_{t, T}^{g_l} \bigl[\xi_{t,T}^l\bigl(\hat{u}, \tilde{v}\bigr)\bigr] \ge L_t, \quad \mathbb{P}-a.s. \Bigr\}, \label{Eq2.13}
\end{align}
for some measurable mapping $F \colon \hat{u}_{\cdot} \in \mathcal{U}_{[t,T]} \rightsquigarrow \hat{v}_{\cdot} \in \mathcal{V}_{[t,T]}$. Moreover, if we substitute $\hat{w} = (\hat{u},F(\hat{u}))$ into \eqref{Eq2.3}, then the corresponding solution $X_s^{t,x;\hat{w}}$ depends uniformly on $\hat{u}_{\cdot}$ for $s \in [t, T]$. Further, the risk-averse decision problem (which minimizes the accumulated risk-cost under $u$ w.r..t the {\it leader}) is then reduced to finding an optimal risk-averse solution for
\begin{align}
\inf_{u_{\cdot} \in \mathcal{U}_{[t,T]}}  J_l\bigl[\bigl(u, F(u)\bigr) \bigr], \label{Eq2.14}
\end{align}
where
\begin{align}
J_l\bigr[\bigl(u, F(u)\bigr)\bigl] = \rho_{t, T}^{g_l} \bigl[\xi_{t,T}^l\bigl(u,F(u)\bigr)\bigr]. \label{Eq2.15}
\end{align}

\begin{remark} \label{R2.2}
Note that the generator functionals $g_l$ and $g_f$ contain a common term $g$ that acts on different processes (see also equation~\eqref{Eq2.9} and \eqref{Eq2.10}). Moreover, due to differing cost functionals w.r.t. the {\it leader} and {\it follower}, $\rho_{t, T}^{g_l} \bigl[\,\cdot\,]$ and $\rho_{t, T}^{g_f} \bigl[\,\cdot\,]$ provide multi-structure dynamic risk measures.
\end{remark}

Next, we introduce the definition of admissible hierarchical risk-averse decision system $\Sigma_{[t, T]}$, with time-consistent, dynamic risk measures, which provides a logical construct for our main results (e.g., see also \cite{LIW14}).
\begin{definition} \label{Df2}
For a given finite-time horizon $T>0$, we call $\Sigma_{[t, T]}$ an admissible hierarchical risk-averse decision system, if it satisfies the following conditions:
\begin{itemize}
\item $\bigl(\Omega, \mathcal{F},\{\mathcal{F}_t \}_{t \ge 0}, \mathbb{P}\bigr)$ is a complete probability space;
\item $\bigl\{B_s\bigr\}_{s \ge t}$ is a $d$-dimensional standard Brownian motion defined on $\bigl(\Omega, \mathcal{F}, \mathbb{P}\bigr)$ over $[t, T]$ and $\mathcal{F}^t \triangleq \bigl\{\mathcal{F}_s^t\bigr\}_{s \in [t, T]}$, where $\mathcal{F}_s^t = \sigma\bigl\{\bigl(B_s; \,t \le s \le T \bigr)\bigr\}$ is augmented by all $\mathbb{P}$-null sets in $\mathcal{F}$;
\item $u_{\cdot} \colon \Omega \times [s, T]  \rightarrow U$ and $v_{\cdot} \colon \Omega \times [s, T]  \rightarrow V$ are $\bigl\{\mathcal{F}_s^t\bigr\}_{s \ge t}$-adapted processes on $\bigl(\Omega, \mathcal{F}, \mathbb{P}\bigr)$ with 
\begin{align*}
\mathbb{E} \int_{s}^{T} \vert u_{\tau}\vert^2 d \tau < \infty \quad \text{and}  \quad \mathbb{E} \int_{s}^{T} \vert v_{\tau}\vert^2 d \tau < \infty, \quad s \in [t, T];
\end{align*}
\item There exists at least one measurable mapping $F \colon u_{\cdot} \in \mathcal{U}_{[t,T]} \rightsquigarrow v_{\cdot} \in \mathcal{V}_{[t,T]}$ with $v_{\cdot} = F\bigl(u_{\cdot} \bigr)$ whenever $u_{\cdot} \in \mathcal{U}_{[t,T]}$ satisfies \eqref{Eq2.12};
\item For any $x \in \mathbb{R}^d$, the FBSDEs in \eqref{Eq2.3}, \eqref{Eq2.10} and the reflected-BSDE in \eqref{Eq2.9} admit a unique solution set $\bigl\{X_{\cdot}^{s,x;u}, (\hat{Y}_{\cdot}^{s,x;w}, \hat{Z}_{\cdot}^{s,x;w},  A_{\cdot}^{s,x;w}), (\tilde{Y}_{\cdot}^{s,x;w}, \tilde{Z}_{\cdot}^{s,x;w})\bigr\}$ on $\bigl(\Omega, \mathcal{F}, \mathcal{F}^t, \mathbb{P}\bigr)$ with \,$w_{\cdot}=\bigl(u_{\cdot}, F(u_{\cdot})\bigr)$.
\end{itemize}
\end{definition}

Then, with restriction to the above admissible hierarchical risk-averse decision system, we can state the risk-averse decision problem as follow.

{\bf Problem~(P)}. Find a pair of risk-averse strategies $(u_{\cdot}^{\ast}, v_{\cdot}^{\ast}) \in \mathcal{U}_{[0,T]} \otimes \mathcal{V}_{[0,T]}$ w.r.t. the {\it leader} and that of the {\it follower}, with $\xi^{Target} \in L^2(\Omega, \mathcal{F}_T, \mathbb{P}; \mathbb{R})$, such that
\begin{align}
 u_{\cdot}^{\ast} \in \Bigl\{ \arginf J_l\bigr[\bigl(u, v\bigr)\bigl] \Bigl \vert v_{\cdot} = F(u_{\cdot}) \,\, \& \,\, (u_{\cdot}, F(u_{\cdot})) \,\, \text{restricted to} \,\,\Sigma_{[0, T]} \Bigr\} \label{Eq2.16}
\end{align}
and
\begin{align}
  v_{\cdot}^{\ast} \in \Bigl\{ \arginf J_f\bigr[\bigl(u, v\bigr)\bigl] \Bigl \vert  v_{\cdot}^{\ast} = F(u_{\cdot}^{\ast}) \,\, \& \,\,  (u_{\cdot}^{\ast},F(u_{\cdot}^{\ast})) \, \text{restricted to} \,\Sigma_{[0, T]} \Bigr\}, \label{Eq2.17}
\end{align}
where $F$ is a measurable mapping the set $\mathcal{U}_{[0,T]}$ onto $\mathcal{V}_{[0,T]}$ and, furthermore, the accumulated risk-costs $J_l$ and $J_f$ over the time-interval $[0, T]$ are given  
\begin{align}
J_l\bigr[\bigl(v, w\bigr)\bigl] = \int_0^T c_l\bigl(s, X_s^{0,x;w}, u_s\bigr) ds + \Psi_l(X_T^{0,x;w}), \quad \Psi_l(X_T^{0,x;w}) = \xi^{Target}  \label{Eq2.18}
\end{align}
and
\begin{align}
J_f\bigr[\bigl(v, w\bigr)\bigl] = \int_0^T c_f\bigl(s, X_s^{0,x;w}, v_s\bigr) ds + \Psi_f(X_T^{0,x;w}),  \label{Eq2.19}
\end{align}
where $X_0^{0,x;w} = x$ and $w_{\cdot}=(u_{\cdot},v_{\cdot})$.\footnote{Note that
\begin{align*}
  v_{\cdot}^{\ast} \in \Bigl\{ \arginf J_f\bigr[\bigl(u, v\bigr)\bigl] \Bigl \vert  u_{\cdot} = F^{-1}(v_{\cdot}) \,\, \& \,\,  (F^{-1}(v_{\cdot}),v) \, \text{restricted to} \,\Sigma_{[0, T]} \Bigr\},
\end{align*}
 where $F^{-1} \colon v_{\cdot} \in \mathcal{V}_{[t,T]} \rightsquigarrow u_{\cdot} \in \mathcal{U}_{[t,T]}$.}

In the following section, we establish the existence of optimal risk-averse solutions, in the sense of viscosity, for the risk-averse optimization problems in \eqref{Eq2.16} and \eqref{Eq2.17} with restriction to $\Sigma_{[0, T]}$. Note that, for a given $u_{\cdot} \in \mathcal{U}_{[0,T]}$, the risk-averse optimization problem in \eqref{Eq2.17} has a unique solution on $\mathcal{V}_{[0,T]}$. Moreover, as we will see later on, the problem in \eqref{Eq2.16} makes sense if the {\it follower} is involved not only in minimizing his own accumulated risk-cost (in response to the risk-averse decision of the {\it leader}) but also in minimizing that of the {\it leader}.

\section{Main results} \label{S3}
In this section, we present our main results, where we introduce a  hierarchical framework under which the {\it follower} is required to respond optimally to the risk-averse decision of the {\it leader} so as to achieve an overall risk-averseness. Moreover, such a framework allows us to establish the existence of optimal risk-averse solutions, in the sense of viscosity solutions, to the associated risk-averse dynamic programming equations. 

We now state the following propositions that will be useful for proving our main results in Subsections~\ref{S3.1} and \ref{S3.3}.
\begin{proposition} \label{P1}
Suppose that the generator functional $g$ satisfies Assumption~\ref{AS1}. Further, let the statements in \eqref{Eq1.7}, \eqref{Eq1.8} and Assumption~\ref{AS2} along with \eqref{Eq2.4} hold true. Then, for any $(t,x) \in [0, T] \times \mathbb{R}^d$ and for every $w_{\cdot}=(u_{\cdot}, v_{\cdot}) \in \mathcal{U}_{[t,T]} \otimes \mathcal{V}_{[t,T]}$, the FBSDEs in \eqref{Eq2.3}, \eqref{Eq2.10} and the reflected-BSDE in \eqref{Eq2.9} admit unique adapted solutions
\begin{eqnarray}
\left.\begin{array}{c}
X_{\cdot}^{t,x;w} \in  \mathcal{S}^2\bigl(t, T; \mathbb{R}^{d} \bigr) ~~~~~~\\
\bigl(\hat{Y}_{\cdot}^{t,x;w}, \hat{Z}_{\cdot}^{t,x;w}, A_{\cdot}^{t,x;w}\bigr) \in \mathcal{S}^2\bigl(t, T; \mathbb{R} \bigr) \times  \mathcal{H}^2\bigl(t, T; \mathbb{R}^{d} \bigr) \times \mathcal{S}^2\bigl(t, T; \mathbb{R} \bigr) \\
\bigl(\tilde{Y}_{\cdot}^{t,x;w}, \tilde{Z}_{\cdot}^{t,x;w}\bigr) \in \mathcal{S}^2\bigl(t, T; \mathbb{R} \bigr) \times  \mathcal{H}^2\bigl(t, T; \mathbb{R}^{d} \bigr)
\end{array}\right\}  \label{Eq3.1}
\end{eqnarray}
Furthermore, the risk-values w.r.t. the {\it leader} and {\it follower}, i.e., $V_l^{u}\bigl(t, x\bigr)$ and $V_f^{v}\bigl(t, x\bigr)$, are deterministic.
\end{proposition}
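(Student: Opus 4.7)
The plan is to decouple the three equations and apply the classical well-posedness results sequentially: first the forward SDE \eqref{Eq2.3}, then the standard BSDE \eqref{Eq2.10} for the \emph{follower}, then the reflected BSDE \eqref{Eq2.9} for the \emph{leader}. Because $m$ and $\sigma$ are Lipschitz with linear growth (via \eqref{Eq1.7}--\eqref{Eq1.8}) and the controls $w_\cdot=(u_\cdot,v_\cdot)$ are square-integrable by the definitions of $\mathcal{U}_{[t,T]}$ and $\mathcal{V}_{[t,T]}$, a classical It\^o-type argument yields a unique strong solution $X_\cdot^{t,x;w}\in\mathcal{S}^2(t,T;\mathbb{R}^d)$ of \eqref{Eq2.3} together with the standard moment bound $\mathbb{E}\bigl[\sup_{t\le s\le T}\vert X_s^{t,x;w}\vert^{2p}\bigr]\le C(1+\vert x\vert^{2p})$ for some $C=C(T,K,p)$. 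I would establish this first and then freeze $X^{t,x;w}$ as an input to the two backward equations.

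Next, with $X^{t,x;w}$ fixed, \eqref{Eq2.10} becomes a standard BSDE driven by $\tilde g_f(s,y,z):=c_f(s,X_s^{t,x;w},v_s)+g(s,y,z)$ with terminal datum $\Psi_f(X_T^{t,x;w})$. The $L^2$-integrability of the terminal datum and the fact that $\tilde g_f(\cdot,0,0)\in\mathcal{H}^2(t,T;\mathbb{R})$ follow from the growth bound \eqref{Eq1.8}, the moment estimate on $X^{t,x;w}$, and the square-integrability of $v_\cdot$; the $(y,z)$-Lipschitz property is inherited directly from (\ref{AS1}.1) since $c_f$ is $(y,z)$-independent. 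Lemma~\ref{L1} then produces the unique adapted pair $(\tilde Y^{t,x;w},\tilde Z^{t,x;w})\in\mathcal{S}^2(t,T;\mathbb{R})\times\mathcal{H}^2(t,T;\mathbb{R}^d)$.

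For the reflected equation \eqref{Eq2.9}, I would appeal to the existence/uniqueness theorem for reflected BSDEs of El~Karoui et al. The driver $\tilde g_l(s,y,z):=c_l(s,X_s^{t,x;w},u_s)+g(s,y,z)$ satisfies the same Lipschitz and $\mathcal{H}^2$-at-zero requirements as $\tilde g_f$ (this time using \eqref{Eq1.7}), and $\Psi_l(X_T^{t,x;w})\in L^2(\Omega,\mathcal{F}_T,\mathbb{P};\mathbb{R})$ for the same reason. The obstacle $L_s=h(s,X_s^{t,x;w})$ is continuous and progressively measurable by the joint continuity of $h$ and the continuity of trajectories of $X^{t,x;w}$; the bound \eqref{Eq2.1} together with the moment estimate on $X^{t,x;w}$ gives $\mathbb{E}\bigl[\sup_{t\le s\le T}(L_s^+)^2\bigr]<\infty$, matching \eqref{Eq2.2}. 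Crucially, the terminal compatibility $L_T\le\Psi_l(X_T^{t,x;w})$ is exactly the pointwise assumption $h(t,x)\le\Psi_l(x)$ in (\ref{AS2}.3). The reflected-BSDE theorem then delivers the unique triple $(\hat Y^{t,x;w},\hat Z^{t,x;w},A^{t,x;w})$ in $\mathcal{S}^2(t,T;\mathbb{R})\times\mathcal{H}^2(t,T;\mathbb{R}^d)\times\mathcal{S}^2(t,T;\mathbb{R})$ with $A^{t,x;w}$ continuous, non-decreasing, $A_t^{t,x;w}=0$, and satisfying the Skorokhod flat-off condition stated in Section~\ref{S2}.

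Finally, the determinism of $V_l^{u}(t,x)$ and $V_f^{v}(t,x)$ is immediate from the structure imposed in Definition~\ref{Df2}: the filtration $\{\mathcal{F}_s^t\}_{s\ge t}$ is generated by the Brownian increments from time $t$ onwards and augmented by $\mathbb{P}$-null sets, so $\mathcal{F}_t^t$ is $\mathbb{P}$-trivial. Since $\hat Y_t^{t,x;w}$ and $\tilde Y_t^{t,x;w}$ are $\mathcal{F}_t^t$-measurable, they coincide a.s. with their expectations and are therefore deterministic functions of $(t,x)$. The main technical obstacle I anticipate is not conceptual but bookkeeping: one must check, when passing from the abstract hypotheses of the reflected-BSDE existence theorem to the present composed driver $\tilde g_l(s,y,z)$, that the random input $X^{t,x;w}$ does not destroy the uniform-in-$\omega$ integrability/Lipschitz properties of the generator; this is handled by combining (\ref{AS1}.1)--(\ref{AS1}.2), the growth condition \eqref{Eq1.7}, the $L^{2p}$-moment bound on $X^{t,x;w}$, and the square-integrability of the controls, but it is the step most prone to slippage.
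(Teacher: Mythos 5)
Your proposal is correct, and it reaches the conclusion by a partly different route than the paper. For the forward SDE and the follower's standard BSDE you do essentially what the paper does (pathwise well-posedness of \eqref{Eq2.3}, then Lemma~\ref{L1} applied to the driver $c_f(s,X_s^{t,x;w},v_s)+g(s,y,z)$, with the growth conditions \eqref{Eq1.7}--\eqref{Eq1.8} and the moment bounds on $X^{t,x;w}$ guaranteeing the $L^2$ terminal datum and the $\mathcal{H}^2$-at-zero property). The divergence is in the reflected equation: the paper does not invoke the reflected-BSDE existence theorem directly; instead it writes two auxiliary \emph{standard} BSDEs whose terminal values absorb the accumulated running costs and, for the leader, the terminal value $A_T^{t,x;w}$ of the reflection process (in the spirit of Remark~\ref{R2.1}), solves them by Lemma~\ref{L1}, and then recovers $\hat Y$ and $\tilde Y$ by subtracting $A_{\cdot}^{t,x;w}$ and the running-cost integrals. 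That transformation is economical but leans on already knowing the increasing process $A^{t,x;w}$ (its terminal value enters the auxiliary terminal condition), so it is closer to a verification of the relation between the two formulations than a from-scratch existence argument. Your approach -- citing the El Karoui--Kapoudjian--Pardoux--Peng--Quenez theorem after checking the obstacle's continuity, the integrability condition \eqref{Eq2.2} via \eqref{Eq2.1} and the moment estimate on $X^{t,x;w}$, and the terminal compatibility $h(T,x)\le\Psi_l(x)$ from (\ref{AS2}.3) -- delivers the triple $(\hat Y,\hat Z,A)$ together with uniqueness without that circularity, and it makes explicit the compatibility checks the paper leaves implicit. The determinism argument is the same in both proofs (measurability of $\hat Y_t^{t,x;w}$, $\tilde Y_t^{t,x;w}$ with respect to the $\mathbb{P}$-trivial $\sigma$-field $\mathcal{F}_t^t$), though you spell it out while the paper merely observes it.
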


\begin{proof}
Notice that $m$ and $\sigma$ are bounded and Lipschitz continuous w.r.t. $(t,x) \in [0, T] \times \mathbb{R}^d$ and uniformly for $(u, v) \in U \times V$. Then, for any $(t,x) \in [0, T] \times \mathbb{R}^d$ and $w_{\cdot}=(u_{\cdot}, v_{\cdot})$ are progressively measurable processes, there always exists a unique path-wise solution $X_{\cdot}^{t,x;w} \in  \mathcal{S}^2\bigl(t, T; \mathbb{R}^{d} \bigr)$ for the forward SDE in \eqref{Eq2.3}. On the other hand, consider the following BSDEs
\begin{align}
 -d \bar{Y}_s^{t,x;w} = g_l\bigl(s, X_s^{t,x;w}, \bar{Y}_s^{t,x;w}, \hat{Z}_s^{t,x;w}\bigr) ds - \hat{Z}_s^{t,x;w} dB_s, \label{EqP1.1}
\end{align}
where  
\begin{align*}
\bar{Y}_T^{t,x;w} = A_T^{t,x;w} + \int_t^T c_l\bigl(\tau, X_{\tau}^{t,x;w}, u_{\tau}\bigr) d\tau + \Psi_l(X_T^{t,x;w})
\end{align*}
and 
\begin{align}
 -d \breve{Y}_s^{t,x;u} =  g_f\bigl(s, X_s^{t,x;w}, \breve{Y}_s^{t,x;w}, \tilde{Z}_s^{t,x;w}\bigr) ds - \tilde{Z}_s^{t,x;w} dB_s, \label{EqP1.2}
\end{align}
where 
\begin{align*}
\breve{Y}_T^{2;t,x;u} = \int_t^T c_f\bigl(\tau, X_{\tau}^{t,x;w}, v_{\tau}\bigr) d\tau + \Psi_f(X_T^{t,x;w}). 
\end{align*}
From Lemma~\ref{L1} (and see also Remark~\ref{R2.1}), the equations in \eqref{EqP1.1} and \eqref{EqP1.2} admit unique solutions $\bigl(\bar{Y}_{\cdot}^{t,x;w}, \hat{Z}_{\cdot}^{t,x;w}\bigr)$ and $\bigl(\breve{Y}_{\cdot}^{t,x;w}, \tilde{Z}_{\cdot}^{t,x;w}\bigr)$ in $\mathcal{S}^2\bigl(t, T; \mathbb{R} \bigr) \times  \mathcal{H}^2\bigl(t, T; \mathbb{R}^{d} \bigr)$. Furthermore, if we introduce the following
\begin{align*}
\hat{Y}_s^{t,x;w} = \bar{Y}_s^{t,x;w} - A_s^{t,x;w} - \int_t^s c_l\bigl(\tau, X_{\tau}^{t,x;w}, u_{\tau}\bigr) d\tau,   \quad s \in [t, T]
\end{align*}
and
\begin{align*}
\tilde{Y}_s^{t,x;w} = \breve{Y}_s^{t,x;w} - \int_t^s c_f\bigl(\tau, X_{\tau}^{t,x;w}, v_{\tau}\bigr) d\tau,  \quad s \in [t, T].
\end{align*}
Then, the forward of the reflected BSDE in \eqref{Eq2.9} and that of the BSDE in \eqref{Eq2.10} hold, respectively, with $\bigl(\hat{Y}_{\cdot}^{t,x;w}, \hat{Z}_{\cdot}^{t,x;w}, A_{\cdot}^{t,x;w}\bigr)$ and $\bigl(\tilde{Y}_{\cdot}^{t,x;w}, \tilde{Z}_{\cdot}^{t,x;w}\bigr)$. Moreover, we also observe that $\hat{Y}_t^{t,x;w}$ and $\tilde{Y}_t^{t,x;w}$ are deterministic. This completes the proof of Proposition~\ref{P1}. \qed
\end{proof}

\begin{proposition} \label{P2}
Let $(t,x) \in [0, T] \times \mathbb{R}^d$ and $w_{\cdot} = (u_{\cdot}, v_{\cdot}) \in \mathcal{U}_{[t,T]} \otimes \mathcal{V}_{[t,T]}$ be restricted to $\Sigma_{[t, T]}$ (cf. Definition~\ref{Df2}). Then, for any $r \in [t, T]$ and $\mathbb{R}^d$-valued $\mathcal{F}_r^t$-measurable random variable $\eta$, we have
\begin{align}
 V_l^{u}\bigl(r, \eta\bigr) &= \hat{Y}_r^{t,x;w} \notag\\
                                    &\triangleq \rho_{r, T}^{g_l} \Bigl[\int_r^T c_l\bigl(s, X_s^{r,\eta;w}, u_s\bigr) ds + \Psi_l(X_T^{r,\eta;w}) \Bigr], \quad \mathbb{P}{\text-a.s.}  \label{Eq3.2}
\end{align}
and
\begin{align}
 V_f^{v}\bigl(r, \eta\bigr) &= \tilde{Y}_r^{t,x;w} \notag\\
                                    &\triangleq \rho_{r, T}^{g_f} \Bigl[\int_r^T c_f\bigl(s, X_s^{r,\eta;w}, v_s\bigr) ds + \Psi_f(X_T^{r,\eta;w}) \Bigr], \quad \mathbb{P}{\text-a.s.} \label{Eq3.3}
\end{align}
\end{proposition}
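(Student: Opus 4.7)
The plan is to combine pathwise uniqueness of the forward SDE \eqref{Eq2.3} with uniqueness of adapted solutions of the (reflected) BSDEs supplied by Proposition~\ref{P1}, and then extend from the canonical initial datum $X_r^{t,x;w}$ to a general $\mathcal{F}_r^t$-measurable $\eta$ by an approximation argument.

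First, I would invoke pathwise uniqueness of \eqref{Eq2.3}, which holds because $m$ and $\sigma$ are Lipschitz, to obtain the flow property: for every $s\in[r,T]$,
\begin{align*}
X_s^{t,x;w} = X_s^{r,\,X_r^{t,x;w};\,w}, \quad \mathbb{P}\text{-a.s.}
\end{align*}
In particular $h\bigl(s, X_s^{t,x;w}\bigr) = h\bigl(s, X_s^{r, X_r^{t,x;w}; w}\bigr)$, and the terminal data $\Psi_l\bigl(X_T^{t,x;w}\bigr)$ and $\Psi_f\bigl(X_T^{t,x;w}\bigr)$ coincide with their $(r, X_r^{t,x;w})$-initialized counterparts.

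Next, I would restrict the reflected BSDE \eqref{Eq2.9} and the standard BSDE \eqref{Eq2.10} to $[r,T]$. The restrictions $\bigl(\hat{Y}_{\cdot}^{t,x;w}, \hat{Z}_{\cdot}^{t,x;w}, A_{\cdot}^{t,x;w} - A_r^{t,x;w}\bigr)\big|_{[r,T]}$ and $\bigl(\tilde{Y}_{\cdot}^{t,x;w}, \tilde{Z}_{\cdot}^{t,x;w}\bigr)\big|_{[r,T]}$ satisfy exactly the equations driven by $X_{\cdot}^{r, X_r^{t,x;w}; w}$ with the same generators $g_l, g_f$ and the same terminal data. Uniqueness (Lemma~\ref{L1}, together with Remark~\ref{R2.1} for the reflected version) then forces
\begin{align*}
\hat{Y}_r^{t,x;w} = \hat{Y}_r^{r,\,X_r^{t,x;w};\,w}, \qquad \tilde{Y}_r^{t,x;w} = \tilde{Y}_r^{r,\,X_r^{t,x;w};\,w}.
\end{align*}
By Definition~\ref{Df1} and the identifications \eqref{Eq2.5} and \eqref{Eq2.7}, these values are precisely $V_l^{u}(r, X_r^{t,x;w})$ and $V_f^{v}(r, X_r^{t,x;w})$, and they match the $\rho_{r,T}^{g_l}$- and $\rho_{r,T}^{g_f}$-representations appearing on the right-hand sides of \eqref{Eq3.2}--\eqref{Eq3.3}.

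To extend the identity to an arbitrary $\mathcal{F}_r^t$-measurable $\eta \in L^2(\Omega,\mathcal{F}_r^t,\mathbb{P};\mathbb{R}^d)$, I would first verify the claim for simple $\eta = \sum_{i=1}^{n} x_i\, 1_{A_i}$ with $\{A_i\}\subset\mathcal{F}_r^t$ a finite partition: on each cell $A_i$ the SDE/BSDE solutions agree with those started at the deterministic $(r, x_i)$, so pasting the deterministic values $V_l^{u}(r, x_i), V_f^{v}(r, x_i)$ (deterministic by Proposition~\ref{P1}) yields \eqref{Eq3.2}--\eqref{Eq3.3}. The extension to general $\eta$ then follows by $L^2$-approximation together with the standard $\mathcal{S}^2$- and $\mathcal{H}^2$-stability of solutions of \eqref{Eq2.3}, \eqref{Eq2.9}, \eqref{Eq2.10} with respect to their initial data, which is available under the Lipschitz hypotheses on $(m,\sigma)$ and on the generators $(g_l, g_f)$.

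The main obstacle I expect is the reflected-BSDE piece: one has to check that $A_{\cdot}^{t,x;w} - A_r^{t,x;w}$ on $[r,T]$ is the minimal continuous increasing process enforcing $\hat{Y}_s \ge h\bigl(s, X_s^{r, X_r^{t,x;w}; w}\bigr)$, together with the Skorokhod condition on $[r,T]$. This follows by splitting $\int_t^T\bigl(\hat{Y}_s - h(s, X_s^{t,x;w})\bigr)dA_s^{t,x;w}=0$ into integrals over $[t,r]$ and $[r,T]$ and using the pathwise identification of the trajectories; the equivalence between the reflected BSDE and a standard BSDE highlighted in Remark~\ref{R2.1}, combined with the uniqueness in Lemma~\ref{L1}, then closes the argument.
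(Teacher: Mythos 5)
Your proposal is correct in substance but takes a genuinely different route from the paper's. You prove the identity by the classical Markov-property machinery for FBSDEs: the flow property $X_s^{t,x;w}=X_s^{r,\,X_r^{t,x;w};\,w}$ from pathwise uniqueness, restriction of \eqref{Eq2.9}--\eqref{Eq2.10} to $[r,T]$ together with uniqueness of the (reflected) BSDE solutions, and then passage from deterministic initial points to a general $\mathcal{F}_r^t$-measurable $\eta$ by simple-function pasting and $L^2$-stability; for the reflected part the continuity estimate you need is exactly Proposition~\ref{AP2} of the Appendix, which the paper itself invokes (in Lemma~\ref{L2}) for a closely related purpose. The paper instead argues by conditioning: it works under a regular conditional probability $\mathbb{P}(\cdot\,\vert\,\mathcal{F}_r^t)$, under which $\eta$ is deterministic and the controls are represented as functionals $\psi_i\bigl(s,\bar{B}_{\cdot\wedge s}+B_r\bigr)$ of the shifted Brownian motion $\bar{B}_s=B_s-B_r$; the resulting system remains admissible in $\Sigma_{[t,T]}$, and Lemma~\ref{L1} applied on the conditional space yields \eqref{Eq3.2}--\eqref{Eq3.3} $\mathbb{P}$-a.s.\ in one stroke. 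The conditioning route is shorter and, more importantly, it is what justifies treating $V_l^{u}(r,\cdot)$ and $V_f^{v}(r,\cdot)$ as deterministic functions to be evaluated at $\eta$ even though $u,v$ on $[r,T]$ are only $\{\mathcal{F}_s^t\}$-adapted: in your cell-wise step the identification of $\hat{Y}_r^{r,x_i;w}$ with the deterministic value $V_l^{u}(r,x_i)$ is only automatic for Markovian (feedback) decisions, as assumed when the paper writes \eqref{Eq2.9}--\eqref{Eq2.10}, since for a general adapted $w$ the restricted solution retains $\mathcal{F}_r^t$-randomness through the control --- precisely the issue the paper's conditioning device removes. With that caveat made explicit (either by citing the Markovian-decision assumption or by conditioning on $\mathcal{F}_r^t$ before pasting), your approximation argument closes and gives the same conclusion; what it buys is a more explicit, estimate-based proof, at the cost of the stability bookkeeping that the conditional-probability argument avoids.
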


\begin{proof}
For any $r \in [t, T]$, with $t \in [0, T]$, we consider the following probability space $\bigl(\Omega, \mathcal{F}, \mathbb{P}\bigl(\cdot\vert \mathcal{F}_r^t\bigr), \{\mathcal{F}^t\}\bigr)$ and notice that $\eta$ is deterministic under this probability space. Then, for any $s \ge r$, there exist progressively measurable processes $\psi_1$ and $\psi_2$ such that
\begin{align}
 \bigl(u_s(\Omega), v_s(\Omega)\bigl) &= \bigl(\psi_1(\Omega, B_{\cdot \wedge s}(\Omega)), \psi_2(\Omega, B_{\cdot \wedge s}(\Omega)) \bigr), \\  
                                                               &= \bigl(\psi_1(s, \bar{B}_{\cdot \wedge s}(\Omega) + B_{r}(\Omega)), \psi_2(s, \bar{B}_{\cdot \wedge s}(\Omega) + B_{r}(\Omega)) \bigr), \label{EqP2.1}
\end{align}
where $\bar{B}_s = B_s - B_r$ is a standard $d$-dimensional brownian motion. Note that the pairs $\bigl(u_{\cdot}, v_{\cdot}\bigl)$ are $\mathcal{F}_r^t$-adapted processes, then we have the following restriction w.r.t. $\Sigma_{[t, T]}$
\begin{align}
 \bigl(\Omega, \mathcal{F}, \{\mathcal{F}^t\}, \mathbb{P}\bigl(\cdot \vert \mathcal{F}_r^t\bigr)(\omega'), B_{\cdot}, \bigl(u_{\cdot}, v_{\cdot} \bigl)\bigr) \in \Sigma_{[t, T]}, \label{EqP2.2}
\end{align}
where $\omega' \in \Omega'$ such that $\Omega' \in \mathcal{F}$, with $\mathbb{P}(\Omega')=1$. Furthermore, noting Lemma~\ref{L1}, if we work under the probability space $\bigl(\Omega', \mathcal{F}, \mathbb{P}\bigl(\cdot\vert \mathcal{F}_r^t\bigr)\bigr)$, then both statements in \eqref{Eq3.2} and \eqref{Eq3.3} hold $\mathbb{P}$-{\it almost surely}. This completes the proof of Proposition~\ref{P2}. \qed
\end{proof}

In what follows, we restrict our discussion w.r.t. the generator functional $g_f$ which is associated with the {\it follower}. Moreover, for $w=(u, v) \in U \times V$ and any $ \phi(x) \in C_0^{\infty}(\mathbb{R}^d)$, we introduce a family of second-order linear operators,  associated with \eqref{Eq1.1}, as follow
\begin{align}
 \mathcal{L}_{t}^{(u,v)} \phi(x)  = \dfrac{1}{2} \operatorname{tr} \Bigl\{a(t, x, w) D_{x}^2 \phi(x)\Bigr\} + m(t, x, w) D_{x} \phi(x), \quad t \in [0, T],  \label{Eq3.4}
\end{align}
where $a(t, x, w) = \sigma(t, x, w) \sigma^T(t, x, w)$, $D_{x}$ and $D_{x}^2$, (with $D_{x}^2 = \bigl({\partial^2 }/{\partial x_i \partial x_j} \bigr)$) are the gradient and the Hessian (w.r.t. the variable $x$), respectively. Further, on the space $C_b^{1,2}([t, T] \times \mathbb{R}^d)$, for any $(t,x) \in [0, T] \times \mathbb{R}^d$, we consider the following Hamilton-Jacobi-Bellman (HJB) partial differential equation (PDE)
\begin{eqnarray}
\left.\begin{array}{r}
 \dfrac{\partial \varphi(t, x)}{\partial t}  + \inf_{v \in V} \Bigl\{\mathcal{L}_{t}^{(u,v)} \varphi(t, x) ~~~ \hspace{1.5in} \quad \quad \quad  \\
  + g_f\bigl(t, \varphi(t, x), D_x \varphi(t, x) \cdot \sigma(t, x, (u,v))\bigr)\Bigr\} = 0  \\
   \text{where} \,\, u \,\, \text{is assumed to be given} 
\end{array}\right\}  \label{Eq3.6}
\end{eqnarray}
with the following boundary condition
\begin{align}
\varphi(T, x) = \Psi_f(T, x), \quad x \in \mathbb{R}^d. \label{Eq3.7}
\end{align}

\begin{remark} \label{R3.1}
Here, we remark that the above equation in \eqref{Eq3.6} together with \eqref{Eq3.7}, is associated with the risk-averse decision problem for the {\it follower}, restricted to $\Sigma_{[t,T]}$ (cf. Definition~\ref{Df2}), with cost functional in \eqref{Eq2.19}. Moreover, it represents a generalized HJB equation with additional terms $g_f$. Note that the problem of FBSDEs and reflected BSDEs (cf. equations~\eqref{Eq2.3}, \eqref{Eq2.9} and \eqref{Eq2.10}) and the solvability of the related HJB partial differential equations (PDEs) have been well studied in literature (e.g., see \cite{Ant93}, \cite{ElKKPQ97}, \cite{HUP95}, \cite{LIW14}, \cite{MaPY94}, \cite{ParT99}, \cite{Pen91} and \cite{Pen92}).
\end{remark}

Next, we recall the definition of viscosity solutions for \eqref{Eq3.6} together with \eqref{Eq3.7} (e.g., see \cite{CraIL92}, \cite{FleS06} or \cite{Kry08} for additional discussions on the notion of viscosity solutions).

\begin{definition} \label{Df3}
The functions $\varphi \colon [0, T] \times \mathbb{R}^d$ is viscosity solutions for \eqref{Eq3.6} together with the boundary conditions in \eqref{Eq3.7}, if the following conditions hold
\begin{enumerate} [(i)]
\item for every $\psi \in C_b^{1,2}([0, T], \times \mathbb{R}^d)$ such that $\psi \ge \varphi$ on $[0, T] \times \mathbb{R}^d$,
\begin{align}
\sup_{(t,x)} \bigl\{\varphi(t,x) - \psi(t,x) \bigr\} = 0, \label{Eq3.8}
\end{align}
and for $(t_{0},x_{0}) \in  [0, T] \times \mathbb{R}^d$ such that  $\psi(t_{0}, x_{0})=\varphi(t_{0}, x_{0})$ (i.e., a local maximum at $(t_{0},x_{0})$), then we have
\begin{align}
 \dfrac{\partial \psi(t_{0},x_{0})}{\partial t}  &+ \inf_{v \in V} \Bigl\{\mathcal{L}_{t}^{(u,v)} \psi(t_{0},x_{0}) \notag \\
 & + g_f\bigl(t_{0}, x_{0}, \psi(t_{0},x_{0}), D_x \psi(t_{0},x_{0}) \cdot \sigma(t_{0},x_{0}, (u,v))\bigr)\Bigr\} \ge 0 \label{Eq3.10}
\end{align}
\item for every $\psi \in C_b^{1,2}([0, T], \times \mathbb{R}^d)$ such that $\psi \le \varphi$ on $[0, T] \times \mathbb{R}^d$,
\begin{align}
\inf_{(t,x)} \bigl\{ \varphi(t,x) - \psi(t,x) \bigr\} = 0,  \label{Eq3.11}
\end{align}
and for $(t_{0},x_{0}) \in  [0, T] \times \mathbb{R}^d$ such that  $\psi(t_{0}, x_{0})=\varphi(t_{0}, x_{0})$ (i.e., a local minimum at $(t_{0},x_{0})$), then we have
\begin{align}
 \dfrac{\partial \psi(t_{0},x_{0})}{\partial t}  &+ \inf_{v \in V} \Bigl\{ \mathcal{L}_{t}^{(u,v)} \psi(t_{0},x_{0}) \notag \\
 & + g_f\bigl(t_{0},x_{0}, \psi(t_{0},x_{0}), D_x \psi(t_{0},x_{0}) \cdot \sigma(t_{0},x_{0}, (u,v))\bigr)\Bigr\} \le 0. \label{Eq3.13}
\end{align}
\end{enumerate}
\end{definition}

\subsection{On the risk-averse optimality condition for the {\it follower}} \label{S3.1}
Suppose that, for a given {\it leader's} risk-averse decision $\hat{u}_{\cdot} \in \mathcal{U}_{[t,T]}$, the decision for the {\it follower} is an optimal solution to \eqref{Eq2.7}. Then, with restriction to $\Sigma_{[t, T]}$, such a solution is characterized by the following propositions (i.e., Propositions~\ref{P3}, \ref{P4} and \ref{P5}). 
\begin{proposition} \label{P3}
Suppose that the generator functional $g$ satisfies Assumption~\ref{AS1}. Further, let the statements in \eqref{Eq1.7}, \eqref{Eq1.8} and Assumption~\ref{AS2} along with \eqref{Eq2.4} hold true. Let $\hat{u}_{\cdot} \in \mathcal{U}_{[t,T]}$ be given, then the risk-value function w.r.t. the {\it follower} is given by
\begin{align}
 V_f^{v}\bigl(t, x\bigr) = \inf_{v_{\cdot} \in \mathcal{V}_{[t,r]} \bigl \vert \Sigma_{[t, T]}}  \rho_{t, r}^{g_f} \Bigl[\int_t^r c_f\bigl(s, X_s^{t,x;w}, v_s\bigr) ds +V_f^{v}\bigl(r, X_r^{t,x;w}\bigr) \Bigr] \label{Eq3.14}
\end{align}
 for any $(t,x) \in [0, T] \times \mathbb{R}^d$ and $r \in [t, T]$, with $w=(\hat{u}, v)$.
\end{proposition}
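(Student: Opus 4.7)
The plan is to derive \eqref{Eq3.14} from the flow (semigroup) property of the BSDE that defines $\rho^{g_f}$, combined with Proposition~\ref{P2} and the monotonicity/translation-invariance properties of $\rho^{g_f}$. Fix $\hat u_{\cdot}\in\mathcal{U}_{[t,T]}$ and write $w=(\hat u,v)$ throughout. For any admissible $v\in\mathcal{V}_{[t,T]}$ restricted to $\Sigma_{[t,T]}$, the restriction of \eqref{Eq2.10} to $[t,r]$ is a BSDE with generator $g_f$, data $c_f(s,X_s^{t,x;w},v_s)$, and terminal value $\tilde Y_r^{t,x;w}$; by Lemma~\ref{L1} and Proposition~\ref{P1} its unique solution in $\mathcal S^2\times\mathcal H^2$ coincides with $(\tilde Y^{t,x;w},\tilde Z^{t,x;w})|_{[t,r]}$. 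Invoking Definition~\ref{Df1} yields the semigroup identity
\begin{align*}
\tilde Y_t^{t,x;w}=\rho_{t,r}^{g_f}\Bigl[\int_t^r c_f(s,X_s^{t,x;w},v_s)\,ds+\tilde Y_r^{t,x;w}\Bigr].
\end{align*}
Applying Proposition~\ref{P2} at time $r$ together with the flow property $X_s^{t,x;w}=X_s^{r,X_r^{t,x;w};w}$ for $s\ge r$ identifies $\tilde Y_r^{t,x;w}$ with the follower's accumulated risk-cost for the sub-problem starting at $(r,X_r^{t,x;w})$ under the continuation $v|_{[r,T]}$; in particular $\tilde Y_r^{t,x;w}\ge V_f^{v}(r,X_r^{t,x;w})$ $\mathbb{P}$-a.s., with equality whenever $v|_{[r,T]}$ is optimal from time $r$.

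With this identity in hand, the two inequalities in \eqref{Eq3.14} are obtained separately. For the inequality $\ge$: given $\varepsilon>0$, choose $v^\varepsilon\in\mathcal V_{[t,T]}|\Sigma_{[t,T]}$ with $\tilde Y_t^{t,x;w^\varepsilon}\le V_f^{v}(t,x)+\varepsilon$, apply the semigroup identity, and use the monotonicity of $\rho_{t,r}^{g_f}$ (Property~\ref{Pr1}(p2)) together with $\tilde Y_r^{t,x;w^\varepsilon}\ge V_f^{v}(r,X_r^{t,x;w^\varepsilon})$ to obtain
\begin{align*}
V_f^{v}(t,x)+\varepsilon\ge \rho_{t,r}^{g_f}\Bigl[\int_t^r c_f(s,X_s^{t,x;w^\varepsilon},v_s^\varepsilon)\,ds+V_f^{v}(r,X_r^{t,x;w^\varepsilon})\Bigr],
\end{align*}
which dominates the infimum on the right-hand side of \eqref{Eq3.14}; letting $\varepsilon\downarrow 0$ closes this direction. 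For the inequality $\le$: given $v^1\in\mathcal V_{[t,r]}|\Sigma_{[t,T]}$ and $\varepsilon>0$, a measurable selection argument produces an $\varepsilon$-optimal continuation $v^{2,\varepsilon}$ such that the concatenation $v^\varepsilon=v^1\mathbf 1_{[t,r)}+v^{2,\varepsilon}\mathbf 1_{[r,T]}$ is $\{\mathcal F_s^t\}$-adapted, remains admissible for $\Sigma_{[t,T]}$, and satisfies $\tilde Y_r^{t,x;w^\varepsilon}\le V_f^{v}(r,X_r^{t,x;w^1})+\varepsilon$ almost surely. Using the semigroup identity, the monotonicity, and the translation invariance of $\rho^{g_f}$ (Property~\ref{Pr1}(p2)--(p3)), one deduces
\begin{align*}
V_f^{v}(t,x)\le \tilde Y_t^{t,x;w^\varepsilon}\le \rho_{t,r}^{g_f}\Bigl[\int_t^r c_f(s,X_s^{t,x;w^1},v_s^1)\,ds+V_f^{v}(r,X_r^{t,x;w^1})\Bigr]+\varepsilon,
\end{align*}
and the desired bound follows by minimizing over $v^1$ and sending $\varepsilon\downarrow 0$.

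The delicate step is the measurable pasting in the $\le$ direction: one must verify that the concatenated $v^\varepsilon$ remains admissible under $\Sigma_{[t,T]}$ — in particular, that the leader-compatibility constraint $\rho_{t,T}^{g_l}[\xi_{t,T}^l(\hat u,v^\varepsilon)]\ge L_t$ from \eqref{Eq2.13} is preserved for $w^\varepsilon=(\hat u,v^\varepsilon)$ — and that the associated system of FBSDE and reflected-BSDE continues to admit a unique adapted solution with stable dependence on the random initial condition $X_r^{t,x;w^1}$. This is exactly where the Lipschitz conditions on $g$, the growth bounds \eqref{Eq1.7}--\eqref{Eq1.8}, and the uniform non-degeneracy of $\sigma\sigma^T$ enter, since they guarantee enough regularity of $x\mapsto V_f^{v}(r,x)$ for the $\varepsilon$-optimal continuation to be selected measurably in $\omega$. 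Once admissibility is secured, the remainder of the argument is a routine application of the time-consistency and monotonicity of the dynamic risk measure $\rho^{g_f}$ that follow from Lemma~\ref{L1} and Theorem~\ref{T1}.
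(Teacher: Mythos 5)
Your proposal is correct and follows essentially the same route as the paper's proof: an $\epsilon$-optimal control for the whole horizon combined with time-consistency, translation-invariance and monotonicity of $\rho^{g_f}$ gives the inequality $\ge$, and a measurable-selection-based concatenation of an $\epsilon$-optimal continuation at time $r$ (your ``pasting'' step, which the paper carries out via the control $v^0$ in \eqref{EqP3.4}) gives the inequality $\le$. The admissibility of the concatenated control under $\Sigma_{[t,T]}$, which you rightly flag as the delicate point, is treated in the paper only by asserting that $v^0$ remains restricted to $\Sigma_{[t,T]}$, so your discussion is if anything more explicit than the original argument.
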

\begin{proof}
Notice that $\hat{u}_{\cdot} \in \mathcal{U}_{[t,T]}$ is given. Then, for any $\epsilon > 0$, there exists $\tilde{v}_{\cdot} \in \mathcal{V}_{[t,T]}$ such that $V_f^{v}\bigl(t, x\bigr)  + \epsilon \ge V_f^{\tilde{v}}\bigl(t, x\bigr)$. Further, if we applying the properties of time-consistency and translation to $V_f^{\tilde{v}}\bigl(t, x\bigr)$, then we have
\begin{align}
&V_f^{v}\bigl(t, x\bigr)  + \epsilon \ge V_f^{\tilde{v}}\bigl(t, x\bigr) \notag \\
                                          &\quad = \rho_{t, r}^{g_f} \Bigl[\rho_{r, T}^{g_f} \Bigl[\int_t^T c_f\bigl(s, X_s^{t,x;\tilde{w}}, \tilde{v}_s\bigr) ds + \Psi_f(X_T^{t,x;\tilde{w}}) \Bigr] \Bigr] \notag \\
                                          &\quad = \rho_{t, r}^{g_f} \Bigl[ \int_t^r c_f\bigl(s, X_s^{t,x;\tilde{w}}, \tilde{v}_s\bigr) ds + \rho_{r, T}^{g_f} \Bigl[ \int_r^T c_f\bigl(s, X_s^{t,x;\tilde{w}}, \tilde{v}_s\bigr) ds + \Psi_f(X_T^{t,x;\tilde{w}}) \Bigr] \Bigr], 
                                                    \label{EqP3.1}
\end{align}
where $\tilde{w}_{\cdot}=(\hat{u}_{\cdot}, \tilde{v}_{\cdot})$ is restricted to $\Sigma_{[t, T]}$. Moreover, if we apply Proposition~\ref{P2}, then we have  
\begin{align}
V_f^{v}\bigl(t, x\bigr) + \epsilon &\ge \rho_{t, r}^{g_f} \Bigl[ \int_t^r c_f\bigl(s, X_s^{t,x;\tilde{w}}, \tilde{v}_s\bigr) ds + V_f^{\tilde{v}}\bigl(r, X_r^{t,x;\tilde{w}}\bigr)\Bigr] \notag \\
                                                   &\ge \rho_{t, r}^{g_f} \Bigl[ \int_t^r c_f\bigl(s, X_s^{t,x;\tilde{w}}, \tilde{v}_s\bigr) ds + V_f^{v}\bigl(r, X_r^{t,x;\tilde{w}}\bigr)\Bigr] \notag \\
                                                   &\ge \inf_{v_{\cdot} \in \mathcal{V}_{[t,r]} \bigl \vert \Sigma_{[t, T]}} \rho_{t, r}^{g_f} \Bigl[ \int_t^r c_f\bigl(s, X_s^{t,x;\tilde{w}}, \tilde{v}_s\bigr) ds + V_f^{v}\bigl(r, X_r^{t,x;w}\bigr)\Bigr]. \label{EqP3.2}
\end{align}
Since $\epsilon$ is arbitrary, we obtain \eqref{Eq3.14}. On the other hand, to show the reverse inequality $``\le"$, let $\tilde{v}_{\cdot}$ (which is restricted to $\Sigma_{[t, T]}$) be an $\epsilon$-optimal solution, for a fixed $\epsilon >0$, to the the problem on the right-hand side of \eqref{Eq3.14}.That is,
\begin{align}
\inf_{v_{\cdot} \in \mathcal{V}_{[t,r]} \bigl \vert \Sigma_{[t, T]}} \rho_{t, r}^{g_f} \Bigl[ \int_t^r &c_f\bigl(s, X_s^{t,x;\tilde{w}}, \tilde{v}_s\bigr) ds + V_f^{v}\bigl(r, X_r^{t,x;w}\bigr)\Bigr] + \epsilon \notag \\                                                   & \quad \ge \rho_{t, r}^{g_f} \Bigl[ \int_t^r c_f\bigl(s, X_s^{t,x;\tilde{w}}, \tilde{v}_s\bigr) ds + V_f^{v}\bigl(r, X_r^{t,x;\tilde{w}}\bigr)\Bigr].  \label{EqP3.3}
\end{align}
Then, for every $y \in \mathbb{R}^d$, let $\tilde{v}_{\cdot}(y) \in \mathcal{V}_{[t,T]}$ be such that $V_f^{v}\bigl(r, y\bigr) + \epsilon \ge V_f^{\tilde{v}(y)}\bigl(t, x\bigr)$ and restricted to $\Sigma_{[t, T]}$. Due to the measurable selection theorem, we may assume that the function $y \rightarrow \tilde{v}(y)$ is Borel measurable. Further, suppose that a control function $v_{\cdot}^0$ is defined as follow
\begin{eqnarray}
v_s^0 =\left\{\begin{array}{l l}
 \bar{v}_s,  & s \in [t, r)\\
  \tilde{v}_s(X_s^{t,x;\bar{w}}), & s \in [r, T].
\end{array}\right.  \label{EqP3.4}
\end{eqnarray}
Note that, from the above definition, $v_{\cdot}^0$ is restricted to $\Sigma_{[t, T]}$. Then, using the properties of the monotonicity, translation and time-consistency, we obtain the following
\begin{align}
 \rho_{t, r}^{g_f} &\Bigl[ \int_t^r c_f\bigl(s, X_s^{t,x;\bar{w}}, \bar{v}_s\bigr) ds + V_f^{\bar{w}}\bigl(r, X_r^{t,x;\bar{w}}\bigr)\Bigr] \notag \\
                                                   &\ge \rho_{t, r}^{g_f} \Bigl[ \int_t^r c_f\bigl(s, X_s^{t,x;\bar{w}}, \bar{v}_s\bigr) ds + V_f^{\tilde{v}_s(X_s^{t,x;\bar{w}})}\bigl(r, X_r^{t,x;\bar{w}}\bigr) -\epsilon \Bigr], \,\, \text{with} \,\, \bar{w} =(\hat{u}, \bar{v}) \notag \\
                                                   &\ge \rho_{t, T}^{g_f} \Bigl[ \int_t^T c_f\bigl(s, X_s^{t,x;w^0}, \bar{v}_s^0\bigr) ds + \Psi_f\bigl(X_T^{t,x;w^0}\bigr)\Bigr] -\epsilon, \,\, \text{with} \,\, w^0 =(\hat{u}, v^0)  \notag \\
                                                   &= V_f^{v^{0}} \bigl(t, x\bigr) - \epsilon. \label{EqP3.5}
\end{align}
If we combine the inequalities from \eqref{EqP3.3} and \eqref{EqP3.5}, then we have
\begin{align}
\inf_{v_{\cdot} \in \mathcal{V}_{[t,r]} \bigl \vert \Sigma_{[t, T]}} \rho_{t, r}^{g_f} \Bigl[ \int_t^r c_f\bigl(s, X_s^{t,x;w}, v_s\bigr) ds + V_f^{v}\bigl(r, X_r^{t,x;w}\bigr)\Bigr] &+ \epsilon                                 \ge V_f^{v^0}\bigl(t, x\bigr) - \epsilon \notag \\
                                 &\ge V_f^{v}\bigl(t, x\bigr) - \epsilon.  \label{EqP3.6}
\end{align}
Note that, since $\epsilon$ is arbitrary, we obtain \eqref{Eq3.14}. This completes the proof of Proposition~\ref{P3}. \qed
\end{proof}

Then, we have the following results (i.e., Propositions~\ref{P4} and \ref{P5}) that characterize the measurable mapping $F$ in \eqref{Eq2.7}.
\begin{proposition}\label{P4}
Suppose that the generator functional $g$ satisfies Assumption~\ref{AS1}. Let $V$ be a compact set in $\mathbb{R}^{d}$ and $\hat{u}_{\cdot} \in \mathcal{U}_{[t,T]}$ be given. Then, the risk-value function $V_f^{v}\bigl(\cdot, \cdot\bigr)$ is the viscosity solution of \eqref{Eq3.6} with boundary condition $\Psi_f(T, x)$ for $x \in \mathbb{R}^d$ and with $w=(\hat{u}, v)$.
\end{proposition}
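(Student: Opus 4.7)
\textbf{Proof plan for Proposition~\ref{P4}.} \,The strategy is the standard viscosity programme for BSDE-based value functions, with the dynamic programming principle (DPP) of Proposition~\ref{P3} serving as the main engine and the BSDE Comparison Theorem~\ref{T1} translating inequalities on terminal data into inequalities on the $Y$-component. First, I would record continuity of $(t,x)\mapsto V_f^{v}(t,x)$ (needed to make the viscosity definition meaningful): this follows from the standard estimates on $X_s^{t,x;w}$ under the Lipschitz/growth hypotheses together with the $\mathcal{S}^2$-stability of the BSDE~\eqref{Eq2.10} in its terminal/parameter data, so that $V_f^{v}(T,x)=\Psi_f(T,x)$ is recovered as $t\uparrow T$. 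The boundary condition in \eqref{Eq3.7} is thus immediate.

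\emph{Subsolution.} Fix $(t_0,x_0)\in[0,T)\times\mathbb{R}^d$ and $\psi\in C_b^{1,2}$ with $\psi\ge V_f^{v}$ and $\psi(t_0,x_0)=V_f^{v}(t_0,x_0)$. For $h>0$ small and an arbitrary but fixed $v\in V$ interpreted as the constant control $v_s\equiv v$ on $[t_0,t_0+h]$ (combined with the given $\hat u$ to form $w$), the DPP~\eqref{Eq3.14} gives
\begin{equation*}
V_f^{v}(t_0,x_0)\;\le\;\rho_{t_0,t_0+h}^{g_f}\Bigl[\int_{t_0}^{t_0+h}c_f\bigl(s,X_s^{t_0,x_0;w},v\bigr)\,ds+V_f^{v}\bigl(t_0+h,X_{t_0+h}^{t_0,x_0;w}\bigr)\Bigr].
\end{equation*}
Using $V_f^{v}\le\psi$ and the monotonicity property (p2) of the $g_f$-risk measure, the right-hand side is bounded by the same expression with $V_f^{v}$ replaced by $\psi$. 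Next, I would apply Itô's formula to $\psi(s,X_s^{t_0,x_0;w})$ on $[t_0,t_0+h]$ to rewrite the augmented terminal value as
\begin{equation*}
\psi(t_0,x_0)+\int_{t_0}^{t_0+h}\!\Bigl(\partial_t\psi+\mathcal{L}_s^{(\hat u_s,v)}\psi+c_f(s,\cdot,v)\Bigr)\,ds+\int_{t_0}^{t_0+h}\!D_x\psi\cdot\sigma\,dB_s,
\end{equation*}
and then represent the $g_f$-risk measure as the $Y$-component of the BSDE with this terminal value. Subtracting $\psi(t_0,x_0)=V_f^{v}(t_0,x_0)$, dividing by $h$, and letting $h\downarrow0$ along continuity points, the stochastic integral disappears in $L^2$, the Lipschitz continuity of $g_f$ (Assumption~\ref{AS1}.1) together with (\ref{AS1}.3) and dominated convergence identifies the limit as
\begin{equation*}
0\le \partial_t\psi(t_0,x_0)+\mathcal{L}_{t_0}^{(\hat u_{t_0},v)}\psi(t_0,x_0)+g_f\bigl(t_0,\psi(t_0,x_0),D_x\psi(t_0,x_0)\cdot\sigma(t_0,x_0,(\hat u_{t_0},v))\bigr).
\end{equation*}
Since $v\in V$ was arbitrary, taking the infimum yields~\eqref{Eq3.10}.

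\emph{Supersolution.} For $\psi\le V_f^{v}$ with equality at $(t_0,x_0)$, I reverse the roles: by the DPP one can select, for each $\epsilon>0$, a near-minimising $v^\epsilon_\cdot$ so that the reverse inequality in \eqref{Eq3.14} holds up to $\epsilon h$. Replacing $V_f^{v}$ at time $t_0+h$ by $\psi$ on the right (which now \emph{increases} the expression by monotonicity, in the direction needed), repeating the Itô expansion, dividing by $h$, and sending $h\downarrow0$ while using Lipschitz continuity of $g_f$ in $(y,z)$ to absorb the $\epsilon$-error, delivers~\eqref{Eq3.13}. The main obstacle is making the interchange of $\inf_v$ with the limit $h\downarrow0$ rigorous: compactness of $V$, joint continuity of $(v,y,z)\mapsto g_f$, and a standard measurable-selection argument (as already invoked in the proof of Proposition~\ref{P3}) handle this, and the $L^2$-stability of the BSDEs~\eqref{Eq2.10} under perturbations of driver and terminal data controls all remainder terms. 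Combining both halves and the boundary identity $V_f^{v}(T,x)=\Psi_f(T,x)$ yields the viscosity-solution property in the sense of Definition~\ref{Df3}.
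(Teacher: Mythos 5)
Your proposal is correct and follows essentially the same route as the paper's own proof: the dynamic programming principle of Proposition~\ref{P3} applied with a constant control on a small interval for the subsolution half and with an $\epsilon\,\delta t$-optimal control for the supersolution half, monotonicity/translation of $\rho^{g_f}$, It\^{o}'s formula combined with uniqueness of the BSDE (Lemma~\ref{L1}) to identify $\tilde{Z}_s = D_x\psi\cdot\sigma$, then division by the time step and compactness of $V$ (with continuity of the integrand) to pass to the pointwise HJB inequalities of Definition~\ref{Df3}. The only slip is verbal: in your supersolution step, replacing $V_f^{v}$ by $\psi\le V_f^{v}$ \emph{decreases} (not increases) the argument of the risk measure by monotonicity, which is precisely the direction that preserves the chain $\psi(t_0,x_0)+\epsilon\,\delta t \ge \rho^{g_f}\bigl[\cdots V_f^{v}\cdots\bigr] \ge \rho^{g_f}\bigl[\cdots \psi\cdots\bigr]$; with that correction your argument coincides with the paper's.
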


\begin{proof}
Suppose that $\varphi \in C_b^{1,2}([0, T] \times \mathbb{R}^d)$ and assume that $\varphi \ge V_f^{v}$ on $[0, T] \times \mathbb{R}^d$ and $\max_{(t,x)} \bigl[V_f^{v}(t,x) - \varphi(t,x)\bigr] = 0$. We consider a point $(t_{0},x_{0}) \in [0, T] \times \mathbb{R}^d$ so that $\varphi(t_{0},x_{0})= V_f^{v}(t_{0},x_{0})$ (i.e., a local maximum at $(t_{0},x_{0})$). Further, for a small $\delta t > 0$, we consider a constant control $v_s=\alpha$ for $s \in [t_{0},t_{0} + \delta t]$. Then, from \eqref{Eq3.14}, we have
\begin{align}
 &\varphi(t_{0},x_{0}) = V_f^{v}(t_{0},x_{0}) \notag \\
                                        &\quad \le \rho_{t_{0},t_{0} + \delta t}^{g_f} \Bigl[\int_{t_{0}}^{t_{0} + \delta t} c_f\bigl(s, X_s^{t_{0},x_{0};w}, \alpha \bigr) ds + V_f^{v}(t_{0} + \delta t, X_{t_{0} + \delta t}^{t_{0},x_{0};w})\Bigr]  \notag \\
                                       &\quad \le \rho_{t_{0},t_{0} + \delta t}^{g_f} \Bigl[\int_{t_{0}}^{t_{0} + \delta t} c_f\bigl(s, X_s^{t_{0},x_{0};w}, \alpha \bigr) ds + \varphi(t_{0} + \delta t, X_{t_{0} + \delta t}^{t_{0},x_{0};w})\Bigr],  \,\, \text{with} \,\, w=(\hat{u}, \alpha). \label{EqP4.1}
\end{align}
Using the translation property of $\rho_{t_{0},t_{0} + \delta t}^{g_f}[\,\cdot\,]$, we obtain the following inequality
\begin{align}
\rho_{t_{0},t_{0} + \delta t}^{g_f} \Bigl[\int_{t_{0}}^{t_{0} + \delta t} c_f\bigl(s, X_s^{t_{0},x_{0};w}, \alpha \bigr) ds + \varphi(t_{0} + \delta t, X_{t_{0} + \delta t}^{t_{0},x_{0};w}) - \varphi(t_{0},x_{0})\Bigr] \ge 0. \label{EqP4.2}
\end{align}
Notice that $\varphi \in C_b^{1,2}([0, T] \times \mathbb{R}^d)$, then, using the It\^{o} formula, we can evaluate the difference between $\varphi(t_{0} + \delta t, X_{t_{0} + \delta t}^{t_{0},x_{0};w})$ and $\varphi(t_{0},x_{0})$ as follow
\begin{align}
\varphi(t_{0} + \delta t, X_{t_{0} + \delta t}^{t_{0},x_{0};w}) - &\varphi(t_{0},x_{0}) = \int_{t_{0}}^{t_{0} + \delta t} \Bigl[\dfrac{\partial}{\partial t} \varphi(s, X_{s}^{t_{0},x_{0};w}) + \mathcal{L}_{t}^{(\hat{u}_s,\alpha)} \varphi(s, X_{s}^{t_{0},x_{0};w}) \Bigr] d s \notag  \\
  & \quad + \int_{t_{0}}^{t_{0} + \delta t} D_x \varphi(s, X_{s}^{t_{0},x_{0};w}) \cdot \sigma(s, X_{s}^{t_{0},x_{0};w}, (\hat{u}_s,\alpha)) d B_s. \label{EqP4.3}
\end{align}
Moreover, if we substitute the above equation into \eqref{EqP4.2}, then we obtain
\begin{align}
\rho_{t_{0},t_{0} + \delta t}^{g_f} \Bigl[& \int_{t_{0}}^{t_{0} + \delta t} \Bigl[c_f\bigl(s, X_s^{t_{0},x_{0};w}, \alpha \bigr) + \dfrac{\partial}{\partial t} \varphi(s, X_{s}^{t_{0},x_{0};w}) + \mathcal{L}_{t}^{(\hat{u}_s,\alpha)} \varphi(s, X_{s}^{t_{0},x_{0};w}) \Bigr] d s \notag \\
& \quad + \int_{t_{0}}^{t_{0} + \delta t} D_x \varphi(s, X_{s}^{t_{0},x_{0};w}) \cdot \sigma(s, X_{s}^{t_{0},x_{0};w}, (\hat{u}_s,\alpha)) d B_s \Bigr]  \ge 0, \label{EqP4.4}
\end{align}
which amounts to solving the following BSDE
\begin{align}
&\tilde{Y}_{t_{0}}^{t_{0},x_{0};w} = \int_{t_{0}}^{t_{0} + \delta t} \Bigl[ \dfrac{\partial}{\partial t} \varphi(s, X_{s}^{t_{0},x_{0};w}) + \mathcal{L}_{t}^{(\hat{u},\alpha)} \varphi(s, X_{s}^{t_{0},x_{0};w}) \Bigr] d s \notag \\
&\,\  + \int_{t_{0}}^{t_{0} + \delta t} D_x \varphi(s, X_{s}^{t_{0},x_{0};w}) \cdot \sigma(s, X_{s}^{t_{0},x_{0};w}, (\hat{u}_s,\alpha)) d B_s \notag \\
&\,\,  + \int_{t_{0}}^{t_{0} + \delta t}g_f\bigl(s, X_s^{t_{0},x_{0};w}, \varphi(s, X_{s}^{t_{0},x_{0};w}), D_x \varphi(s, X_{s}^{t_{0},x_{0};w}) \cdot \sigma(s, X_{s}^{t_{0},x_{0};w}, (\hat{u}_s,\alpha))\bigr) ds \notag \\
& \quad \quad - \int_{t_{0}}^{t_{0} + \delta t} \tilde{Z}_s^{t_{0},x_{0};w} dB_s.  \label{EqP4.5}
\end{align}
From Lemma~\ref{L1}, the above BSDE admits unique solutions, i.e.,
\begin{align*}
\tilde{Z}_s^{t_{0},x_{0};w} = D_x \varphi(s, X_{s}^{t_{0},x_{0};w}) \cdot \sigma(s, X_{s}^{t_{0},x_{0};w}, (\hat{u}_s,\alpha)), \quad t_{0} \le s \le t_{0} + \delta t
\end{align*}
and
\begin{align*}
&\tilde{Y}_{t_{0}}^{t_{0},x_{0};w} = \int_{t_{0}}^{t_{0} + \delta t} \Bigl[\dfrac{\partial}{\partial t} \varphi(s, X_{s}^{t_{0},x_{0};w}) + \mathcal{L}_{t}^{(\hat{u}_s,\alpha)} \varphi(s, X_{s}^{t_{0},x_{0};w}) \notag \\
&\quad + g_f\bigl(s, X_{s}^{t_{0},x_{0};w}, \varphi(s, X_{s}^{t_{0},x_{0};w}), D_x \varphi(s, X_{s}^{t_{0},x_{0};w}) \cdot \sigma(s, X_{s}^{t_{0},x_{0};w}, (\hat{u}_s,\alpha))\bigr) \Bigr] d s.
\end{align*}
Further, if we substitute the above results in \eqref{EqP4.4}, we obtain
\begin{align}
&\int_{t_{0}}^{t_{0} + \delta t} \Bigl[\dfrac{\partial}{\partial t} \varphi(s, X_{s}^{t_{0},x_{0};w}) + \mathcal{L}_{t}^{(\hat{u},\alpha)} \varphi(s, X_{s}^{t_{0},x_{0};w}) \notag \\
&+ g_f\bigl(s, X_{s}^{t_{0},x_{0};w}, \varphi(s, X_{s}^{t_{0},x_{0};w}), D_x \varphi(s, X_{s}^{t_{0},x_{0};w}) \cdot \sigma(s, X_{s}^{t_{0},x_{0};w}, (\hat{u}_s,\alpha))\bigr)  \Bigr] d s \ge 0.  \label{EqP4.6}
\end{align}
Then, dividing the above equation by $\delta t$ and letting $\delta t \rightarrow 0$, we obtain
\begin{align*}
 \dfrac{\partial}{\partial t} \varphi(t_{0},x_{0}) &+ \mathcal{L}_{t}^{(\hat{u},\alpha)} \varphi(t_{0},x_{0}) \\
 &+ g_f\bigl(t_{0}, x_{0}, \varphi(t_{0},x_{0}), D_x \varphi(t_{0},x_{0}) \cdot \sigma(t_{0},x_{0}, (\hat{u}_{t_{0}},\alpha))\bigr) \ge 0. 
\end{align*}
Note that, since $\alpha \in V$ is arbitrary, we can rewrite the above condition as follow
\begin{align}
\dfrac{\partial}{\partial t_{0}} \varphi(t_{0},x_{0}) &+ \min_{\alpha \in V} \Bigl\{ \mathcal{L}_{t}^{(\hat{u},\alpha)} \varphi(t_{0},x_{0}) \notag\\
&+ g_f\bigl(t_{0}, x_{0}, \varphi(t_{0},x_{0}), D_x \varphi(t_{0},x_{0}) \cdot \sigma(t_{0},x_{0}, (\hat{u}_{t_{0}},\alpha))\bigr) \Bigr\} \ge 0, \label{EqP4.7} 
\end{align}
which attains its minimum in $V$ (which is a compact set in $\mathbb{R}^d)$. Thus, $V_f^{v}(\cdot,\cdot)$ is a viscosity subsolution of \eqref{Eq3.15}, with boundary condition $\varphi(T, x)=\Psi_f(T, x)$.

On the other hand, suppose that $\varphi \in C_b^{1,2}([0, T] \times \mathbb{R}^d)$ and assume that $\varphi \le V_f^{v}$ on $[0, T] \times \mathbb{R}^d$ and $\min_{(t,x)} \bigl[V_f^{v}(t,x) - \varphi(t,x)\bigr] =0$. Then, we consider a point $(t_{0},x_{0}) \in [0, T] \times \mathbb{R}^d$ so that $\varphi(t_{0},x_{0})= V_f^{v}(t_{0},x_{0})$ (i.e., a local minimum at $(t_{0},x_{0})$). Further, for a small $\delta t > 0$, Let $\tilde{v}_s$, which is restricted to $\Sigma_{[t_{0},t_{0} + \delta t]}$, be an $\epsilon \delta t$-optimal control for \eqref{Eq3.14} at $(t_{0},x_{0})$. Then, proceeding in this way as \eqref{EqP4.6}, with $w =(\hat{u}_s, \tilde{v}_s)$, we obtain the following
\begin{align}
&\int_{t_{0}}^{t_{0} + \delta t}  \Bigl[\dfrac{\partial}{\partial t} \varphi(s, X_{s}^{t_{0},x_{0};w}) + \mathcal{L}_{t}^{(\hat{u}_s,\tilde{v}_s)} \varphi(s, X_{s}^{t_{0},x_{0};w})  \notag \\
&+ g_f\bigl(s,X_{s}^{t_{0},x_{0};w}, \varphi(s, X_{s}^{t_{0},x_{0};w}), D_x \varphi(s, X_{s}^{t_{0},x_{0};w}) \cdot \sigma(s, X_{s}^{t_{0},x_{0};w}, (\hat{u}_s,\tilde{v}_s))\bigr) \Bigr] d s \le \epsilon \delta t.  \label{EqP4.8}
\end{align}
As a result of this, we also obtain the following
\begin{align}
&\int_{t_{0}}^{t_{0} + \delta t}  \min_{\alpha \in V} \Bigl\{\dfrac{\partial}{\partial t} \varphi(s, X_{s}^{t_{0},x_{0};w}) + \mathcal{L}_{t}^{(\hat{u},\alpha)} \varphi(s, X_{s}^{t_{0},x_{0};u})  \notag \\
&\,\, + g_f\bigl(s, X_{s}^{t_{0},x_{0};w}, \varphi(s, X_{s}^{t_{0},x_{0};w}), D_x \varphi(s, X_{s}^{t_{0},x_{0};w}) \cdot \sigma(s, X_{s}^{t_{0},x_{0};u}, (\hat{u}_s,\alpha))\bigr) \Bigr\} d s \le \epsilon \delta t.  \label{EqP4.9}
\end{align}
Note that the mapping 
\begin{align*}
(s, x, \alpha) \rightarrow \Bigl[\dfrac{\partial}{\partial t} \varphi(t, x) &+ \mathcal{L}_{t}^{(\hat{u}_t,\alpha)} \varphi(t, x) \\
                                                                                        &  + g_f\bigl(t, x, \varphi(t, x), D_x \varphi(t, x) \cdot \sigma(t, x, (\hat{u}_t,\alpha))\bigr) \Bigr]
\end{align*}
is continuous and, since $V$ is compact, then $s \rightarrow X_s^{t_{0},x_{0};w}$ is also continuous. As a result, the expression under the integral in \eqref{EqP4.9} is continuous. Further, if we divide both sides of \eqref{EqP4.9} by $\delta t$ and letting $\delta t \rightarrow 0$, then we obtain the following
\begin{align}
\dfrac{\partial}{\partial t_{0}} \varphi(t_{0},x_{0}) &+ \min_{\alpha \in V} \Bigl\{\mathcal{L}_{t}^{(\hat{u},\alpha)} \varphi(t_{0},x_{0}) \notag\\
&+ g_f\bigl(t_{0}, x_{0}, \varphi(t_{0},x_{0}), D_x \varphi(t_{0},x_{0}) \cdot \sigma(t_{0},x_{0}, (\hat{u}_{t_{0}},\alpha))\bigr) \Bigr\} \le \epsilon. \label{EqP4.10} 
\end{align}
Notice that, since $\epsilon$ is arbitrary, we conclude that $V_f^{v}(\cdot,\cdot)$ is a viscosity supersolution of \eqref{Eq3.13}, with boundary condition $\varphi(T,x)=\Psi_f(T, x)$. This completes the proof of Proposition~\ref{P4}. \qed
\end{proof}

\begin{remark} \label{R3.2}
Note that if \,$V_f^{v} \in C_b^{1,2}([0, T] \times \mathbb{R}^d)$, then such a solution also satisfies \eqref{Eq3.13} with boundary condition $V_f^{v}(T,x)=\Psi_f(T, x)$. Furthermore, using the {\it verification theorem}, one can also identify $V_f^{v}$ as the optimal value function.
\end{remark}

\begin{proposition} \label{P5}
Suppose that Proposition~\ref{P4} holds and let $\varphi \in C_b^{1,2}([0, T] \times \mathbb{R}^d)$ satisfy \eqref{Eq3.6} with $\varphi\bigl(T, x\bigr)=\Psi_f(T, x)$ for $x \in \mathbb{R}^d$. Then, $\varphi\bigl(t, x\bigr) \le V_f^{v}\bigl(t, x\bigr)$ for any control $v_{\cdot} \in \mathcal{V}_{[t,T]}$ with restriction to $\Sigma_{[t, T]}$ and for all $(t,x) \in [0, T] \times \mathbb{R}^d$. Furthermore, if an admissible control process $\hat{v}_{\cdot} \in \mathcal{V}_{[t,T]}$ exists, for almost all $(s, \Omega) \in [0, T] \times \Omega$, together with the corresponding solution $X_s^{t,x;\hat{w}}$, with $\hat{w}_s=(\hat{u}_s, \hat{v}_s)$, and satisfies
\begin{align}
 \hat{v}_s \in &\arginf_{v_{\cdot} \in \mathcal{V}_{[t,T]} \bigl \vert \Sigma_{[t, T]}} \Bigl\{ \mathcal{L}_{s}^{(u,v)} \varphi\bigl(s, X_s^{t,x;w}\bigr) \notag \\
 &~\underbrace{\hspace{0.5in} + \,g_f\bigl(s, X_s^{t,x;w}, \varphi \bigl(s, X_s^{t,x;w}\bigr), D_x\varphi \bigl(s, X_s^{t,x;w}\bigr) \cdot \sigma\bigl(s, X_s^{t,x;w}, (\hat{u}_s, v_s\bigr)\bigr)\bigr)\Bigr\}}_{\triangleq F(\hat{u}) \,\, \text{with} \,\, F \colon \hat{u}_{\cdot} \in \mathcal{U}_{[t,T]} \rightsquigarrow \hat{v}_{\cdot} \in \mathcal{V}_{[t,T]}} \label{Eq3.15}
 \end{align}
Then, $\varphi\bigl(t, x\bigr) = V_f^{\hat{v}}\bigl(t, x\bigr)$ for all $(t,x) \in [0, T] \times \mathbb{R}^d$.
\end{proposition}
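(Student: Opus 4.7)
The plan is to run a standard BSDE verification argument, using Itô's formula on $\varphi$ together with the comparison theorem (Theorem~\ref{T1}) applied to the BSDE representation of $V_f^{v}$ from Propositions~\ref{P1}--\ref{P2}. The two claims will emerge from the same calculation: the first from the HJB \emph{inequality} obtained by freezing an arbitrary admissible $v$, and the second from the HJB \emph{equality} obtained along the minimizer $\hat{v}$.

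First, I would fix $(t,x) \in [0,T] \times \mathbb{R}^d$ and an arbitrary admissible pair $w_{\cdot} = (\hat{u}_{\cdot}, v_{\cdot})$ restricted to $\Sigma_{[t,T]}$, and apply Itô's formula to $\varphi(s, X_s^{t,x;w})$ on $[t,T]$. Setting
\begin{align*}
Y_s^{\varphi} \triangleq \varphi(s, X_s^{t,x;w}), \qquad Z_s^{\varphi} \triangleq D_x \varphi(s, X_s^{t,x;w}) \cdot \sigma(s, X_s^{t,x;w}, w_s),
\end{align*}
Itô's formula gives
\begin{align*}
-dY_s^{\varphi} = -\Bigl[\tfrac{\partial \varphi}{\partial t}(s, X_s^{t,x;w}) + \mathcal{L}_s^{w_s} \varphi(s, X_s^{t,x;w})\Bigr] ds - Z_s^{\varphi} dB_s,
\end{align*}
with terminal value $Y_T^{\varphi} = \Psi_f(X_T^{t,x;w})$. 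The boundedness of the derivatives of $\varphi$ and the linear growth of $\sigma$ ensure $(Y^{\varphi}, Z^{\varphi}) \in \mathcal{S}^2 \times \mathcal{H}^2$, so this is a legitimate BSDE on $[t,T]$.

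Next, I would invoke the assumption that $\varphi$ solves the HJB equation~\eqref{Eq3.6}. For every admissible $v$ it yields the pointwise inequality
\begin{align*}
-\Bigl[\tfrac{\partial \varphi}{\partial t}(s, X_s^{t,x;w}) + \mathcal{L}_s^{w_s} \varphi(s, X_s^{t,x;w})\Bigr] \le g_f\bigl(s, X_s^{t,x;w}, Y_s^{\varphi}, Z_s^{\varphi}\bigr), \qquad s \in [t,T].
\end{align*}
Hence $Y^{\varphi}$ solves the BSDE with generator $g^{\varphi}(s, y, z) := g_f(s, X_s^{t,x;w}, y, z) + h(s)$, where the Lipschitz-in-$(y,z)$ property is inherited from $g_f$ and $h(s) \le 0$ is progressively measurable. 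On the other hand, Propositions~\ref{P1}--\ref{P2} identify $V_f^{v}(s, X_s^{t,x;w}) = \tilde{Y}_s^{t,x;w}$ as the unique solution of the BSDE with generator $g_f(s, X_s^{t,x;w}, y, z)$ and the same terminal condition $\Psi_f(X_T^{t,x;w})$. Since $g^{\varphi} \le g_f$, the comparison result (Theorem~\ref{T1}) delivers $Y_t^{\varphi} \le \tilde{Y}_t^{t,x;w}$, i.e.\ $\varphi(t,x) \le V_f^{v}(t,x)$, which is the first assertion.

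For the second assertion, I would substitute $v_{\cdot} = \hat{v}_{\cdot} = F(\hat{u}_{\cdot})$, which by~\eqref{Eq3.15} attains the infimum in the HJB equation at each $(s, X_s^{t,x;\hat{w}})$. Consequently $h(s) \equiv 0$, $\mathbb{P}$-a.s., so $Y^{\varphi}$ and $\tilde{Y}^{t,x;\hat{w}}$ satisfy the \emph{same} BSDE with generator $g_f(s, X_s^{t,x;\hat{w}}, y, z)$ and terminal $\Psi_f(X_T^{t,x;\hat{w}})$; uniqueness in Lemma~\ref{L1} then forces $Y_t^{\varphi} = \tilde{Y}_t^{t,x;\hat{w}}$, i.e.\ $\varphi(t,x) = V_f^{\hat{v}}(t,x)$. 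The main technical obstacles I anticipate are (i) checking integrability of $Z^{\varphi}$ so the BSDE machinery applies — this reduces to the $C_b^{1,2}$ regularity of $\varphi$ combined with the polynomial growth of $X^{t,x;w}$ guaranteed by Proposition~\ref{P1} — and (ii) justifying that $\hat{v}_{\cdot}$ defined via the argmin selector in~\eqref{Eq3.15} is admissible in $\Sigma_{[t,T]}$, which requires a measurable selection argument using the compactness of $V$ and the continuity of the Hamiltonian in $v$; the admissibility is assumed hypothetically in the statement, so the remaining task is mainly verifying the BSDE reformulation and invoking comparison cleanly.
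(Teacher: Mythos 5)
Your proposal is correct and follows essentially the same route as the paper's proof: apply It\^{o}'s formula to $\varphi(s, X_s^{t,x;w})$ along the controlled path, use the HJB equation \eqref{Eq3.6} to bound the drift by $g_f$ and conclude $\varphi(t,x) \le \tilde{Y}_t^{t,x;w} = V_f^{v}(t,x)$ via BSDE comparison, then observe that along the minimizer $\hat{v}$ the inequality becomes an equality so uniqueness (Lemma~\ref{L1}) gives $\varphi(t,x) = V_f^{\hat{v}}(t,x)$. The only difference is presentational: you make the comparison step explicit by viewing $\varphi(s,X_s^{t,x;w})$ as the solution of a BSDE with the perturbed generator $g_f + h$, $h \le 0$, whereas the paper simply asserts the inequality ``follows''; this is a sharper write-up of the same argument.
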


\begin{proof}
Assume that $(t, x) \in [0, T] \times \mathbb{R}^d$ is fixed. For any $v_{\cdot} \in \mathcal{V}_{[t,T]}$, restricted to $\Sigma_{[t, T]}$, we consider a process $\kappa \bigl(s, X_{s}^{t,x;w}\bigr)$, with $w=(\hat{u}, v)$, for $s \in [t, T]$. Then, using It\^{o} integral formula, we can evaluate the difference between $\kappa \bigl(T, X_{T}^{t,x;w}\bigr)$ and $\kappa \bigl(t, x\bigr)$ as follow\footnote{Notice that $\kappa \bigl(t, x\bigr) \in C_b^{1,2}([0, T] \times \mathbb{R}^d).$}
\begin{align}
\kappa \bigl(T, X_{T}^{t,x;w}\bigr) - &\kappa \bigl(t, x\bigr) = \int_{t}^T \Bigl[\dfrac{\partial}{\partial t}\kappa\bigl(s, X_{s}^{t,x;w}\bigr) + \mathcal{L}_{t}^{(\hat{u}_s,v_s)} \kappa \bigl(s, X_{s}^{t,x;w}\bigr) \Bigr] d s \notag  \\
  & \quad + \int_{t}^T D_x \kappa \bigl(s, X_{s}^{t,x;w}\bigr) \cdot \sigma(s, X_{s}^{t,x;w}, (\hat{u}_s,v_s)) d B_s. \label{EqP5.1}
\end{align}
Using \eqref{Eq3.6}, we further obtain the following
\begin{align}
& \dfrac{\partial}{\partial t} \kappa\bigl(s, X_{s}^{t,x;w}\bigr) + \mathcal{L}_{t}^{(\hat{u}_s,v_s)} \kappa\bigl(s, X_{s}^{t,x;w}\bigr) \notag \\
& \quad + g_f\bigl(s, X_{s}^{t,x;w}, \kappa\bigl(s, X_{s}^{t,x;w}\bigr), D_x \kappa\bigl(s, X_{s}^{t,x;w}\bigr) \cdot \sigma(s, X_{s}^{t,x;w}, (\hat{u}_s,v_s))\bigr) \ge 0. \label{EqP5.2}
\end{align}
Furthermore, if we combine \eqref{EqP5.1} and \eqref{EqP5.2}, then we obtain
\begin{align}
\kappa \bigl(t, x\bigr) &\le \Psi_f\bigl(T, X_{T}^{t,x;w}\bigr) \notag \\
& \quad + \int_{t}^T g_f\bigl(s, X_{s}^{t,x;w}, \kappa(s, X_{s}^{t,x;w}), D_x \kappa(s, X_{s}^{t,x;w}) \cdot \sigma(s, X_{s}^{t,x;w}, (\hat{u}_s,v_s))\bigr) ds \notag \\
                                     & \quad  \quad -  \int_{t}^T D_x \kappa(s, X_{s}^{t,x;w}) \cdot \sigma(s, X_{s}^{t,x;w}, (\hat{u}_s,v_s)) d B_s. \label{EqP5.3}
\end{align}
Define $\tilde{Z}_{s}^{t,x;w} = D_x \kappa(s, X_{s}^{t,x;w}) \cdot \sigma(s, X_{s}^{t,x;u}, (\hat{u}_s,v_s))$, for $s \in [t, T]$, then $\kappa \bigl(t, x\bigr) \le \tilde{Y}_{t}^{t,x;w}$ follows, where $(\tilde{Y}_{\cdot}^{t,x;w}, \tilde{Z}_{\cdot}^{t,x;w})$ is a solution to BSDE in \eqref{Eq2.10}. As a result of this, we have
\begin{align*}
 \kappa \bigl(t, x\bigr) \le V_f^{v}\bigl(t, x\bigr).
\end{align*}
Moreover, if there exists at least one $\hat{v}$ satisfying \eqref{Eq3.15}. Then, for $v=\hat{v}$, the inequality in \eqref{EqP5.3} becomes an equality (i.e., $\kappa(t,x)=V_f^{\hat{v}}\bigl(t, x\bigr)$). Note that the corresponding path-wise solution $X_{s}^{t,x;\hat{w}}$, with $\hat{w}=(\hat{u}, \hat{v})$ and $\hat{v}=F(\hat{u})$, is progressively measurable, since $\hat{v}_{\cdot} \in \mathcal{V}_{[t,T]}$ is restricted to $\Sigma_{[t, T]}$. This completes the proof of Proposition~\ref{P5}. \qed
\end{proof}

\subsection{On the stochastic controllability} \label{S3.2}
As we have already mentioned in the previous sections (i.e., Section~\ref{S2} and Subsection~\ref{S3.1}), for a given {\it leader'}s risk-averse decision $\hat{u}_{\cdot} \in \mathcal{U}_{[t,T]}$, the risk-averse optimization in \eqref{Eq2.11} (or equation~\eqref{Eq2.10}) admits a unique solution $\hat{v}_{\cdot} = F(\hat{u}_{\cdot})$, which is restricted to $\Sigma_{[t, T]}$. However, the situation is more involved for the risk-averse optimization in \eqref{Eq2.12} (or equation~\eqref{Eq2.9}). Notice that it is not even clear that, for every $\xi^{Target} \in L^2(\Omega, \mathcal{F}_T, \mathbb{P}; \mathbb{R})$, there exist decision processes $u_{\cdot} \in \mathcal{U}_{[t,T]}$ and $A_{\cdot}^{t,x;w} \in \mathcal{S}^2\bigl(t, T; \mathbb{R} \bigr)$, with $w_{\cdot}=(u_{\cdot}, F(u_{\cdot}))$, such that
\begin{align}
 \hat{Y}_T^{t,x;w} \ge h(T, X_T^{t,x;w}), \label{Eq3.2.1}
\end{align}
\begin{align}
\int_t^T (\hat{Y}_s^{t,x;w} - h(s, X_s^{t,x;w})) d A_s^{t,x;w} = 0, \quad \text{and} \quad L_T = h(T, X_T^{t,x;w}), \label{Eq3.2.2}
\end{align}
for all $t \in [0, T]$. 

Moreover, verifying the above conditions amounted to solving the stochastic controllability type problem, which is indeed useful to describe the set of all acceptable risk-exposures, when $t=0$, vis-\'{a}-vis $\xi^{Target} \in L^2(\Omega, \mathcal{F}_T, \mathbb{P}; \mathbb{R})$, i.e.,
\begin{align}
\mathcal{A}_{0} = \Bigl\{\rho_{0, T}^{g_l} \bigl[\xi^{Target} \bigr] \ge L_T\, \bigl \vert \, &\int_0^T (\hat{Y}_s^{0,x;w} - h(s, X_s^{0,x;w})) d A_s^{0,x;w} = 0, \notag \\
&\quad  \,\, L_T = h(T, X_T^{t,x;w}), \quad t \in [0, T] \Bigr\}.  \label{Eq3.2.3}
\end{align}
In the following subsection, we provide additional results that provide conditions under which the problem in \eqref{Eq2.16} makes sense, if the {\it follower} is involved not only in minimizing his own accumulated risk-cost (in response to the decision of the {\it leader}) but also in minimizing that of the {\it leader}.

\subsection{On the risk-averse optimality condition for the {\it leader}} \label{S3.3}
In this subsection, we provide conditions under which the leader chooses its optimality risk-averse decision, whenever the {\it follower} responds optimally to the {\it leader'}s decision, i.e., $v=F(u)$, with restriction to $\Sigma_{[0, T]}$. Therefore, we suppose here that Proposition~\ref{P5} holds and, further, we will establish a two-way connection between the reflected-BSDE in \eqref{Eq2.9} and a probabilistic representation for the solution of related parabolic obstacle PDE problem.

Notice that, for each $t \in [0, T]$, the natural filtration of the Brownian motion $\{B_s - B_t, t \le s \le T\}$, augmented by the $\mathbb{P}$-null sets of $\mathcal{F}$, is denoted by $\{\mathcal{F}_s^t, t \le s < T\}$. Then, for each $(t, x) \in [0, T] \times \mathbb{R}^d$, with $w=(u,F(u))$, there exists a unique triple $\bigl(\hat{Y}^{t,x;w}, \hat{Z}^{t,x;w}, A^{t,x;w}\bigr)$ of $\bigl\{\mathcal{F}_s^t\bigr\}$ progressively measurable processes, which solves the following reflected-BSDE.
\begin{align}
&(i) \quad \mathbb{E} \int_t^T \bigl(\vert \hat{Y}_r^{t,x;w}\vert^2 + \vert \hat{Z}_r^{t,x;w} \vert^2\bigr) dr < \infty\notag  \\
&(ii) \quad \hat{Y}_s^{t,x;w} = \Psi_l(X_s^{t,x;w}) + \int_t^T g_l(r, X_r^{t,x;w},\hat{Y}_r^{t,x;w}, \hat{Z}_r^{t,x;w}) dr \notag  \\
& \quad \quad \quad \quad \quad \quad + A_T^{t,x;w} - A_r^{t,x;w} - \int_t^T \hat{Z}_r^{t,x;w} d B_r, \quad  t < s \le T,  \label{Eq3.3.1}\\
&(iii) \quad \hat{Y}_s^{t,x;w} \ge h(s, X_s^{t,x;w}), \quad t < s \le T \notag \\
&(iv) \quad \{A_s^{t,x;w}\} ~ \text{is increasing and continuous, and} \notag \\
&  \quad \quad \quad\quad \quad \quad \int_t^T (\hat{Y}_s^{t,x;w} - h(s, X_s^{t,x;w})) d A_s^{t,x;w} = 0. \notag 
\end{align}
More precisely, we consider the following related parabolic obstacle PDE problem. Then, roughly speaking, its solution is a function $\varphi \colon [0, T] \times \mathbb{R}^d \rightarrow \mathbb{R}$ which satisfies
\begin{align}
\min \Bigl(\varphi(t,x) - h(t,x), &-\frac{\partial \varphi}{\partial t}(t,x) - \inf_{u \in U} \Bigl\{ \mathcal{L}_t^{(u,F(u))} \varphi(t,x) \notag \\ 
& \quad + g_l(t, x, \varphi(t,x), D_{x} \varphi(t,x) \cdot \sigma(s, x, (u, F(u))))  \Bigr\} \Bigr) = 0, \notag \\
& \quad\quad\quad\quad\quad\quad (t,x) \in (0,T) \times \mathbb{R}, \notag\\
&\varphi(T,x) = \Psi_l(x), \quad x \in \mathbb{R}^d. \label{Eq3.3.2}
\end{align}

Hence, we consider such a solution for \eqref{Eq3.3.2} in the sense of viscosity. For the sake of convenience, we also provide the definition of viscosity solutions for the above parabolic obstacle PDE problem (cf. Definition~\ref{Df3}).
\newpage
\begin{definition} \label{Df4} ~\\ \vspace{-0.2in}
\begin{enumerate} [(a)]
\item $\varphi \in C([0,T] \times \mathbb{R}^d)$ is said to be a viscosity subsolution of \eqref{Eq3.3.2} if $\varphi(T,x) \le \Psi_l(x)$, $x \in \mathbb{R}^d$, and at any point $(t,x) \in (0,T) \times \mathbb{R}^d$, 
\begin{align*}
\min \Bigl(\varphi(t,x) - h(t,x), & -\frac{\partial \psi}{\partial t}(t,x) - \inf_{u \in U} \Bigl\{ \mathcal{L}_t^{(u,F(u))} \psi(t,x) \\ 
& \quad + g_l(t, x, \varphi(t,x), D_{x} \psi(t,x) \cdot \sigma(s, x, (u, F(u))))  \Bigr\} \Bigr) \le 0.
\end{align*}
In other words, at any point $(t,x)$, where $\varphi(t,x) > h(t,x)$
\begin{align*}
 -\frac{\partial \psi}{\partial t}(t,x) &- \inf_{u \in U} \Bigl\{ \mathcal{L}_t^{(u,F(u))} \psi(t,x) \\ 
& \quad + g_l(t, x, \varphi(t,x), D_{x} \psi(t,x) \cdot \sigma(s, x, (u, F(u)))) \Bigr\} \le 0.
\end{align*}
\item $\varphi \in C([0,T] \times \mathbb{R}^d)$ is said to be a viscosity supersolution of \eqref{Eq3.3.2} if $\varphi(T,x) \ge \Psi_l(x)$, $x \in \mathbb{R}^d$, and at any point $(t,x) \in (0,T) \times \mathbb{R}^d$, 
\begin{align*}
\min \Bigl(\varphi(t,x) - h(t,x), & -\frac{\partial \psi}{\partial t}(t,x) - \inf_{u \in U} \Bigl\{ \mathcal{L}_t^{(u,F(u))} \psi(t,x) \\ 
& \quad + g_l(t, x, \varphi(t,x), D_{x} \psi(t,x) \cdot \sigma(s, x, (u, F(u))))  \Bigr\} \Bigr) \ge 0.
\end{align*} 
\item $\varphi \in C([0,T] \times \mathbb{R}^d)$ is said to be a viscosity solution of \eqref{Eq3.3.2} if is both a viscosity sub- and supersolution.
\end{enumerate}
\end{definition}  

\begin{lemma} \label{L2}
For $(t,x) \in [0,T] \times \mathbb{R}^d$, define 
\begin{align}
\varphi(t,x) \triangleq \hat{Y}_t^{t,x;w}. \label{Eq3.3.3}
\end{align} 
Then, $\varphi \in C([0,T] \times \mathbb{R}^d)$ is a deterministic quantity.
\end{lemma}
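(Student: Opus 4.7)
The plan is first to establish that $\varphi(t,x)$ is deterministic, and then to establish continuity in the pair $(t,x)$. For the deterministic part, observe that by construction the triple $\bigl(\hat{Y}_s^{t,x;w}, \hat{Z}_s^{t,x;w}, A_s^{t,x;w}\bigr)$ is $\{\mathcal{F}_s^t\}_{s \ge t}$-progressively measurable (cf. Definition~\ref{Df2}), so $\hat{Y}_t^{t,x;w}$ is $\mathcal{F}_t^t$-measurable. But $\mathcal{F}_t^t = \sigma\{B_s - B_t : s = t\}$, augmented by the $\mathbb{P}$-null sets, is the trivial $\sigma$-algebra, so $\hat{Y}_t^{t,x;w}$ is $\mathbb{P}$-a.s. equal to a constant. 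Hence $\varphi(t,x)$ is well-defined as a deterministic quantity.

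For continuity in the spatial variable $x$ with $t$ fixed, I would exploit the standard moment estimates for the forward SDE in \eqref{Eq2.3}, which together with the Lipschitz regularity of $m$ and $\sigma$ yield
\begin{align*}
\mathbb{E}\Bigl[\sup_{s \in [t,T]} \bigl|X_s^{t,x;w} - X_s^{t,x';w}\bigr|^2 \Bigr] \le C \bigl|x - x'\bigr|^2,
\end{align*}
uniformly in $w_{\cdot} = (u_{\cdot}, F(u_{\cdot}))$ restricted to $\Sigma_{[t,T]}$. Then, invoking the standard stability estimate for reflected BSDEs (see, e.g., \cite{ElKKPQ97}) together with the Lipschitz property of $\Psi_l$, $g_l$, and the continuity of $h$ from Assumption~\ref{AS2}, one obtains
\begin{align*}
\mathbb{E}\Bigl[\sup_{s \in [t,T]} \bigl|\hat{Y}_s^{t,x;w} - \hat{Y}_s^{t,x';w}\bigr|^2 \Bigr] \le C' \bigl|x - x'\bigr|^2,
\end{align*}
from which continuity of $\varphi(t, \cdot)$ at the single time point $s = t$ follows after taking expectations of the (a.s.~constant) random variables.

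For continuity in the time variable $t$ (with $x$ fixed), I would fix $0 \le t \le t' \le T$ and first use the a priori estimate $\mathbb{E}\bigl|X_{t'}^{t,x;w} - x\bigr|^2 \le C (t' - t)$ combined with pathwise uniqueness of the forward SDE to control the difference $X_{\cdot}^{t,x;w} - X_{\cdot}^{t',x;w}$ on $[t',T]$. Then, splitting
\begin{align*}
\bigl|\hat{Y}_t^{t,x;w} - \hat{Y}_{t'}^{t',x;w}\bigr| \le \bigl|\hat{Y}_t^{t,x;w} - \hat{Y}_{t'}^{t,x;w}\bigr| + \bigl|\hat{Y}_{t'}^{t,x;w} - \hat{Y}_{t'}^{t',x;w}\bigr|,
\end{align*}
I would bound the first summand using the path continuity of $s \mapsto \hat{Y}_s^{t,x;w}$ (itself a consequence of the regularity of $A^{t,x;w}$ and $Z^{t,x;w}$), and the second via the reflected-BSDE stability estimate applied with the forward SDE bound above.

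The main obstacle will be this time-continuity argument: changing the initial time $t$ shifts both the driving Brownian filtration $\{\mathcal{F}_s^t\}$ and the admissible class $\Sigma_{[t,T]}$ relative to which $w_{\cdot} = (u_{\cdot}, F(u_{\cdot}))$ is constructed, so one must carefully realign the two BSDEs on a common probability space and exploit the continuity and monotonicity of the reflector $A^{t,x;w}$ together with the obstacle inequality $\hat{Y}_s^{t,x;w} \ge h(s, X_s^{t,x;w})$. Because $h$ is only assumed jointly continuous (Assumption~\ref{AS2}), the passage to the limit must avoid any regularity stronger than continuity, which is precisely where the reflected-BSDE stability estimates of \cite{ElKKPQ97} (rather than, say, classical BSDE estimates) are indispensable.
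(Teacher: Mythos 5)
Your proposal is correct and follows essentially the same route as the paper: determinism of $\hat{Y}_t^{t,x;w}$ from its $\mathcal{F}_t^t$-measurability (the augmented initial $\sigma$-algebra being trivial), and continuity of $(t,x)\mapsto\hat{Y}_t^{t,x;w}$ from forward-SDE moment estimates combined with the reflected-BSDE stability estimate of \cite{ElKKPQ97} (Proposition~\ref{AP2}). The only organizational difference is that the paper treats space and time jointly rather than separately: it freezes $\hat{Y}_s^{t,x;w}:=\hat{Y}_t^{t,x;w}$ for $0\le s\le t$ and applies the stability estimate once, with the generator cut off by $\mathbf{1}_{[t,T]}$, to get $\mathbb{E}\bigl\{\sup_{0\le s\le T}\vert \hat{Y}_s^{t_n,x_n;w}-\hat{Y}_s^{t,x;w}\vert^2\bigr\}\rightarrow 0$ as $(t_n,x_n)\rightarrow(t,x)$, which is precisely the device that sidesteps the initial-time-shift realignment you flag as the main obstacle in your sequential treatment.
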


\begin{proof}
We define $\hat{Y}_s^{t,x;w}$ for all $s \in [0,T]$ by choosing $\hat{Y}_s^{t,x;w}=\hat{Y}_t^{t,x;w}$ for $0 \le s \le t$. It is suffices to show that whenever $(t_n, x_n) \rightarrow (t,x)$
\begin{align}
\mathbb{E} \Biggl\{\sup_{0 \le s \le T} \Bigl\vert \hat{Y}_s^{t_n,x_n;w} - \hat{Y}_s^{t,x;w}\Bigr\vert^2 \Biggr \} \rightarrow 0. \label{Eq3.3.4}
\end{align} 
Indeed, this will show that
\begin{align}
(s,t,x)  \rightarrow \hat{Y}_s^{t,x;w} \label{Eq3.3.5}
\end{align} 
is mean-square continuous, and so is  
\begin{align}
(t,x)  \rightarrow \hat{Y}_t^{t,x;w}. \label{Eq3.3.6}
\end{align} 
But $\hat{Y}_t^{t,x;w}$ is deterministic, since it is $\mathcal{F}_t^t$-measurable.

Furthermore, note that \eqref{Eq3.3.4} is a consequence of Proposition~\ref{AP2} (see the Appendix section) and the following convergences as $n \rightarrow \infty$:  
\begin{align*}
\mathbb{E} \Bigl\{\bigl\vert \Psi_l(X_T^{t,x;w}) &- \Psi_l(X_T^{t_n,x_n;w}) \bigr\vert^2 \Bigr\} \rightarrow 0 \\
\mathbb{E} \Bigl\{\sup_{0 \le s \le T} \bigl\vert h(s, X_s^{t,x;w}) &- h(s, X_s^{t_n,x_n;w})\bigr\vert^2 \Bigr \} \rightarrow 0 \\
\mathbb{E} \Bigl\{ \int_0^T \bigl\vert \mathbf{1}_{[t,T]} g_l(s, X_s^{t,x;w}, \hat{Y}_s^{t,x;w}, \hat{Z}_s^{t,x;w}) & \\
- \mathbf{1}_{[t_n,T]}g_l(s, X_s^{t_n,x_n;w},& \hat{Y}_s^{t_n,x_n;w}, \hat{Z}_s^{t_n,x_n;w})\bigr\vert^2 ds \Bigr \} \rightarrow 0,
\end{align*} 
that follow from Assumptions~\ref{AS1} and \ref{AS2}, and the growth conditions of $g_l$, $\Psi_l$ and $h$. This completes the proof of Lemma~\ref{L2}. \qed
\end{proof}

\begin{proposition} \label{P6}
Suppose that Lemma~\ref{L2} holds, then $\varphi$ is a viscosity solution of the parabolic obstacle PDE in \eqref{Eq3.3.2}.
\end{proposition}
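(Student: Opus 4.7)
The plan is to verify each of the three requirements in Definition~\ref{Df4}: the terminal condition, the supersolution property, and the subsolution property. For the terminal condition, I would set $s=t=T$ in the reflected-BSDE \eqref{Eq3.3.1}, which immediately forces $\hat Y_T^{T,x;w} = \Psi_l(x)$, giving $\varphi(T,x)=\Psi_l(x)$. The obstacle inequality $\varphi(t,x)\ge h(t,x)$ is automatic from clause $(iii)$ of \eqref{Eq3.3.1} evaluated at $s=t$, so the supersolution \emph{minimum} condition is already partially discharged and only the PDE inequality in the parabolic part needs to be produced.

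The supersolution inequality would come from a standard viscosity argument patterned on Proposition~\ref{P4}: pick a test function $\psi\in C_b^{1,2}$ with $\psi\le\varphi$ and $\psi(t_0,x_0)=\varphi(t_0,x_0)$. Using Proposition~\ref{P2} together with the flow property of the reflected-BSDE (which gives the dynamic programming relation $\varphi(t,x)=\hat Y_{t}^{t,x;w}$ and the one-step representation on $[t_0,t_0+\delta t]$), I would write $\varphi(t_0,x_0)$ as the $g_l$-evaluation of $\varphi(t_0+\delta t,X^{t_0,x_0;w}_{t_0+\delta t})+\int c_l\,ds$ plus the increment of $A$. Since $A$ is nondecreasing, its contribution is $\ge 0$, so replacing $\varphi$ by $\psi$ below gives an inequality. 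Applying It\^o to $\psi(s,X^{t_0,x_0;w}_s)$, using uniqueness for the associated linear BSDE (Lemma~\ref{L1}), dividing by $\delta t$ and sending $\delta t\to 0$ with the chosen constant follower-response $v=F(u)$ and leader control $u\equiv\alpha\in U$, and finally taking $\inf_{\alpha\in U}$ inside, yields
\begin{align*}
-\frac{\partial\psi}{\partial t}(t_0,x_0) - \inf_{u\in U}\Bigl\{\mathcal L_{t_0}^{(u,F(u))}\psi(t_0,x_0) + g_l\bigl(t_0,x_0,\psi,D_x\psi\cdot\sigma(t_0,x_0,(u,F(u)))\bigr)\Bigr\} \ge 0,
\end{align*}
which, combined with $\varphi\ge h$, is exactly the supersolution condition of Definition~\ref{Df4}(b).

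For the subsolution side, take $\psi\in C_b^{1,2}$ with $\psi\ge\varphi$ and $\psi(t_0,x_0)=\varphi(t_0,x_0)$. The crucial observation is that the PDE inequality only has to be proved at points where $\varphi(t_0,x_0)>h(t_0,x_0)$; on the closed region $\{\varphi=h\}$ the first argument of the outer $\min$ in \eqref{Eq3.3.2} already gives $\le 0$. At such an interior point, continuity of $\varphi$ and $h$ (Lemma~\ref{L2} and Assumption~\ref{AS2}.3) yields $\varphi>h$ on a small parabolic cylinder around $(t_0,x_0)$, and by the Skorohod condition $(iv)$ in \eqref{Eq3.3.1} the reflecting process $A^{t_0,x_0;w}$ stays constant on that cylinder with positive probability on a suitable stopping time interval. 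Thus locally the reflected-BSDE reduces to a standard BSDE, on which the representation from Proposition~\ref{P3}-type reasoning and the It\^o--comparison argument of Proposition~\ref{P4} may be replayed verbatim to produce
\begin{align*}
-\frac{\partial\psi}{\partial t}(t_0,x_0) - \inf_{u\in U}\Bigl\{\mathcal L_{t_0}^{(u,F(u))}\psi(t_0,x_0) + g_l\bigl(t_0,x_0,\psi,D_x\psi\cdot\sigma(t_0,x_0,(u,F(u)))\bigr)\Bigr\} \le 0.
\end{align*}

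The hard step will be the local removal of the reflection term in the subsolution argument: one must justify rigorously that the exit time $\tau_\delta=\inf\{s>t_0 : \varphi(s,X_s)\le h(s,X_s)\}\wedge(t_0+\delta t)$ is strictly greater than $t_0$ with probability one when $\varphi(t_0,x_0)>h(t_0,x_0)$, and that $A^{t_0,x_0;w}_{\cdot\wedge\tau_\delta}\equiv 0$, so the reflected-BSDE restricted to $[t_0,\tau_\delta]$ is a standard BSDE whose generator is $g_l$ with drift from $F(u)$. Once this localization is in place, the remaining manipulations (It\^o expansion of $\psi$, identification of $\tilde Z$ with $D_x\psi\cdot\sigma$ via Lemma~\ref{L1}, passage to the infimum over $u\in U$ using compactness of $U$, and $\delta t\to 0$) mirror the proof of Proposition~\ref{P4} almost line for line, and combining the two inequalities above completes the verification that $\varphi$ is a viscosity solution in the sense of Definition~\ref{Df4}.
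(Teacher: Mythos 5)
Your route is genuinely different from the paper's, and this matters for the assessment. The paper never argues directly on the reflected dynamics: it penalizes, sets $\varphi_n(t,x)=\hat Y^{t,x;w}_{n,t}$ for the BSDE with penalty term $n\int_t^T\bigl(\hat Y^{t,x;w}_{n,r}-h(r,X_r^{t,x;w})\bigr)^-dr$, quotes the viscosity characterization of $\varphi_n$ for the penalized semilinear HJB equation, gets $\varphi_n\uparrow\varphi$ uniformly on compacts by Dini's theorem, and then passes to the limit with the stability result (Lemma~6.1 of \cite{CraIL92}); the obstacle enters only through the remark that at points where $\varphi>h$ the penalty term vanishes for large $j$ (subsolution half), while in the other half it can be discarded by its sign. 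Your direct dynamic-programming/localization argument is the other classical strategy, but as written it has two genuine gaps rather than being a complete alternative proof.

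First, the supersolution step is not justified: with a test function $\psi\le\varphi$ touching from below you cannot run the argument ``choose a constant leader control $u\equiv\alpha$, drop the increment of $A$, then take $\inf_{\alpha\in U}$ inside.'' For a fixed constant control the only dynamic-programming inequality available is $\varphi(t_0,x_0)\le$ (reflected evaluation along $\alpha$), which chains neither with dropping $A$ (that lowers the right-hand side) nor with $\psi\le\varphi$; the constant-control device pairs with the test-from-above (subsolution) direction, whereas the test-from-below direction requires $\varepsilon$-optimal controls together with the ``$\ge$'' half of a dynamic programming principle for the \emph{leader's reflected} problem --- exactly as in the second half of the proof of Proposition~\ref{P4}. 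Neither you nor the paper establishes such a DPP: Proposition~\ref{P3} is a DPP only for the follower's unreflected problem, and Proposition~\ref{P2} is a fixed-control representation, not a principle with an infimum over $u$. Second, the localization you correctly flag as the hard step in the subsolution half is not merely technical: to conclude $A^{t_0,x_0;w}_{\cdot\wedge\tau_\delta}\equiv0$ from the Skorokhod condition (iv) of \eqref{Eq3.3.1} you need $\hat Y_s^{t_0,x_0;w}=\varphi(s,X_s^{t_0,x_0;w})$ (or at least $\hat Y_s>h(s,X_s)$) on $[t_0,\tau_\delta]$, i.e.\ the Markov/flow identification of the reflected solution with $\varphi$ along trajectories, which again presupposes the controlled DPP you have not proved. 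The paper's penalization argument is designed precisely to avoid both issues, at the price of invoking the stability lemma and the known viscosity property of the penalized PDEs; your observations on the terminal condition $\varphi(T,x)=\Psi_l(x)$ and the obstacle bound $\varphi\ge h$ are fine.
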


\begin{proof}
In order to prove the above proposition, we will use an approximation for the reflected-BSDE of \eqref{Eq2.9} by penalizing. For each $(t,x) \in [0, T] \times \mathbb{R}^d$, $n \in \mathbb{N}$, let $\bigl\{(\hat{Y}_{n,s}^{t,x;w}, \hat{Z}_{n,s}^{t,x;w}), t \le s \le T\bigr\}$ denote the solution of the following approximated reflected-BSDE
\begin{align*}
&\hat{Y}_{n,s}^{t,x;w} = \Psi_l(X_s^{t,x;w}) + \int_t^T g_l(r, X_r^{t,x;w}, \hat{Y}_{n,r}^{t,x;w}, \hat{Z}_{n,r}^{t,x;w}) dr  \\
& \quad \quad \quad + n \int_t^T \bigl(\hat{Y}_{n,r}^{t,x;w} - h(r, X_r^{t,x;w})\bigr)^{-} - \int_t^T \hat{Z}_{n,r}^{t,x;w} d B_r, \quad  t < s \le T. \\ 
\end{align*}
Then, from \cite{RevY94}, it is clear that
\begin{align*}
\varphi_n(t,x) \triangleq \hat{Y}_{n,t}^{t,x;w}, \quad 0 \le t \le T, \quad x \in \mathbb{R}^d,
\end{align*} 
is also the viscosity solution of the following parabolic PDE
\begin{align*}
  &\frac{\partial \varphi_n}{\partial t}(t,x) + \inf_{u \in U} \Bigl\{ \mathcal{L}_t^{(u,F(u))} \varphi_n(t,x) \\
 & \quad + \hat{g}_l(t, x, \varphi_n(t,x), D_{x} \varphi_n (t,x) \cdot \sigma(t,x,(u,F(u)))) \Bigr\} = 0,  \,\, (t,x) \in (0,T) \times \mathbb{R}\\
& \quad \quad  \varphi_n(T,x) = \Psi_l(x), \quad x \in \mathbb{R}^d, 
\end{align*}
where 
\begin{align*}
&\hat{g}_l(t, x, \varphi(t,x), D_{x} \varphi(t,x) \cdot \sigma(t,x,(u,F(u)))) \\
& \quad \quad = g_l(t, x, \varphi(t,x), D_{x} \varphi (t,x) \cdot \sigma(t,x,(u,F(u)))) - \bigl(\varphi(t,x) - h(t,x)\bigr)^{-}.
\end{align*}
Notice that, for each $ t \in [0,T]$, $x \in \mathbb{R}^d$, we have
\begin{align*}
\varphi_n(t,x) \uparrow \varphi(t,x)  \quad \text{as} \quad n \rightarrow \infty.
\end{align*} 
Since $\varphi_n$ and $\varphi$ are continuous, it follows from Dini's theorem that the above convergence is uniform on compacts.

Next, we show that $\varphi$ is a subsolution of \eqref{Eq3.3.2}. Let $(t,x)$ be a point at which $\varphi(t,x) > h(t,x)$, and let   

From Lemma~6.1 in \cite{CraIL92}, there exists sequences
\begin{align*}
n_j \rightarrow +\infty \quad \text{and} \quad (t_j, x_j) \rightarrow (t,x)
\end{align*}
such that 
\begin{align*}
\Bigl(\frac{\partial \psi_j}{\partial t}, D_{x} \psi_j, D_{x}^2 \psi_j \Bigr) \rightarrow \Bigl(\frac{\partial \psi}{\partial t}, D_{x} \psi, D_{x}^2 \psi \Bigr)
\end{align*}
but for any $j$
\begin{align*}
 -\frac{\partial \psi_j}{\partial t}(t_j, x_j) &-  \inf_{u \in U} \Bigl\{\mathcal{L}_t^{(u,F(u))} \psi_j(t_j, x_j) \\
                                             &- g_l(t_j, x_j, \varphi_{n_j}(t_j, x_j), D_{x} \psi_j(t_j, x_j) \cdot \sigma(t_j, x_j, (u,F(u))))\Bigr\}  \\
                                                     & \quad \quad - n_j\bigl(\varphi_{n_j}(t_j, x_j) - h(t_j, x_j) \bigr)^{-} \le 0.
\end{align*}
From the assumption that $\varphi(t,x) > h(t,x)$ and the uniform convergence of $\varphi_n$, it follows that for $j$ large enough $\varphi_{n_j}(t_j, x_j) > h(t_j, x_j)$; hence taking the limit as $n_j \rightarrow +\infty$ in the above inequality yields
\begin{align*}
 -\frac{\partial \psi}{\partial t}(t,x) &- \inf_{u \in U} \Bigl\{\mathcal{L}_t^{(u,F(u))} \psi(t, x) \\
                                             &- g_l(t, x, \varphi(t, x), D_{x} \psi(t, x) \cdot \sigma(t, x, (u,F(u))))\Bigr\} \le 0
\end{align*}
and we have proved that $\varphi$ is a subsolution of \eqref{Eq3.3.2}.

Then, we conclude the proof by showing that $\varphi$ is a supersolution of \eqref{Eq3.3.2}. Let $(t,x)$ be an arbitrary point in $[0,T] \times \mathbb{R}^d$ and . We already know that $\varphi(t,x) \ge h(t,x)$. By the same argument as above, there exist sequences:
\begin{align*}
n_j \rightarrow +\infty \quad \text{and} \quad (t_j, x_j) \rightarrow (t,x)
\end{align*}
such that 
\begin{align*}
\Bigl(\frac{\partial \psi_j}{\partial t}, D_{x} \psi_j, D_{x}^2 \psi_j \Bigr) \rightarrow \Bigl(\frac{\partial \psi}{\partial t}, D_{x} \psi, D_{x}^2 \psi \Bigr)
\end{align*}
but for any $j$
\begin{align*}
 -\frac{\partial \psi_j}{\partial t}(t_j, x_j) &-  \inf_{u \in U} \Bigl\{\mathcal{L}_t^{(u,F(u))} \psi_j(t_j, x_j) \\
                                             &- g_l(t_j, x_j, \varphi_{n_j}(t_j, x_j), D_{x} \psi_j(t_j, x_j) \cdot \sigma(t_j, x_j, (u,F(u))))\Bigr\}  \\
                                                     & \quad \quad - n_j\bigl(\varphi_{n_j}(t_j, x_j) - h(t_j, x_j) \bigr)^{-} \ge 0.
\end{align*}
Hence, we have
\begin{align*}
 -\frac{\partial \psi_j}{\partial t}(t_j, x_j) &-  \inf_{u \in U} \Bigl\{\mathcal{L}_t^{(u,F(u))} \psi_j(t_j, x_j) \\
                                             &- g_l(t_j, x_j, \varphi_{n_j}(t_j, x_j), D_{x} \psi_j(t_j, x_j) \cdot \sigma(t_j, x_j, (u,F(u))))\Bigr\} 
\end{align*}
and taking the limit as $n_j \rightarrow +\infty$, then we conclude that
\begin{align*}
 -\frac{\partial \psi}{\partial t}(t,x) &- \inf_{u \in U} \Bigl\{\mathcal{L}_t^{(u,F(u))} \psi(t, x) \\
                                             &- g_l(t, x, \varphi(t, x), D_{x} \psi(t, x) \cdot \sigma(t, x, (u,F(u))))\Bigr\} \ge 0.
\end{align*}
This completes the proof of Proposition~\ref{P6}. \qed
\end{proof}

We conclude this subsection with the following proposition, which provides a condition for the {\it leader} to have an optimal risk-averse decision.
\begin{proposition} \label{P7}
Let $\varphi \in C([0, T] \times \mathbb{R}^d)$ be a viscosity solution for the parabolic obstacle PDE in \eqref{Eq3.3.2}, with boundary condition $\varphi(T, x)=\Psi_l(T, x)$, for $x \in \mathbb{R}^d$. Then, $\varphi(t, x) \le V_l^{u}\bigl(t, x\bigr)$ for some $u_{\cdot} \in \mathcal{U}_{[t,T]}$, with restriction to $\Sigma_{[t, T]}$, and for all $(t,x) \in [0, T] \times \mathbb{R}^d$. Furthermore, if an admissible decision process $u_{\cdot}^{\ast} \in \mathcal{U}_{[t,T]}$ exists, for almost all $(s, \Omega) \in [0, T] \times \Omega$, together with the corresponding solution $X_s^{t,x;w^{\ast}}$, with $w_{\cdot}^{\ast}=(u_{\cdot}^{\ast}, F(u_{\cdot}^{\ast}))$, and satisfies
\begin{align}
 &u_s^{\ast} \in \arginf_{u_{\cdot} \in \mathcal{U}_{[t,T]} \bigl \vert \Sigma_{[t, T]}} \Bigl\{\mathcal{L}_{s}^{(u, F(u))} \varphi(s, X_s^{t,x;w}) \notag \\
 & \quad + g_l(s, X_s^{t,x;w}, \varphi (s, X_s^{t,x;w}), D_x\varphi(s, X_s^{t,x;w}) \cdot \sigma(s, X_s^{t,x;w}, (u_s, F(u_s)))) \Bigr\}. \label{Eq3.17}
 \end{align}
Then, $\varphi\bigl(t, x\bigr) = V_l^{u^{\ast}}\bigl(t, x\bigr)$ for all $(t,x) \in [0, T] \times \mathbb{R}^d$.
\end{proposition}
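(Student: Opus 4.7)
The plan is to extend the verification argument of Proposition~\ref{P5} to the reflected-BSDE / obstacle-PDE setting. I would split the statement into two claims: \emph{(i)} $\varphi(t,x)\le V_l^{u}(t,x)$ for every $u_{\cdot}\in\mathcal{U}_{[t,T]}$ restricted to $\Sigma_{[t,T]}$; and \emph{(ii)} under~\eqref{Eq3.17}, the choice $u=u^{\ast}$ turns this into equality. Throughout, I would rely on Proposition~\ref{P6}, together with a viscosity comparison principle for the parabolic obstacle PDE~\eqref{Eq3.3.2}.

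For claim \emph{(i)}, fix an admissible $u_{\cdot}$ and set $\hat{w}=(u,F(u))$. Defining $\tilde{\varphi}^{u}(t,x)\triangleq \hat{Y}_t^{t,x;\hat{w}}$, the same penalization-and-passage-to-the-limit scheme used in the proof of Proposition~\ref{P6}, but with the control $u$ held fixed rather than being optimized, shows that $\tilde{\varphi}^{u}$ is a viscosity solution of the obstacle PDE obtained from~\eqref{Eq3.3.2} by deleting the outer $\inf_{u\in U}$ and freezing the integrand at this particular $u$, with the same terminal datum $\Psi_l$. Since for any test function $\psi$ one has $\inf_{u'\in U}\bigl\{\mathcal{L}_t^{(u',F(u'))}\psi+g_l\bigr\}\le \mathcal{L}_t^{(u,F(u))}\psi+g_l$, the function $\tilde{\varphi}^{u}$ is automatically a viscosity supersolution of~\eqref{Eq3.3.2}. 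A standard Crandall--Ishii--Lions comparison principle for the obstacle PDE, whose hypotheses are secured by the Lipschitz and polynomial-growth conditions on $m$, $\sigma$, $g_l$, $h$ and $\Psi_l$ from Assumptions~\ref{AS1}--\ref{AS2} and~\eqref{Eq1.7}--\eqref{Eq1.8}, then yields $\varphi(t,x)\le \tilde{\varphi}^{u}(t,x)=V_l^{u}(t,x)$.

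For claim \emph{(ii)}, condition~\eqref{Eq3.17} asserts that the pointwise infimum inside~\eqref{Eq3.3.2} is attained along the trajectory $X_s^{t,x;w^{\ast}}$ with $w^{\ast}=(u^{\ast},F(u^{\ast}))$. Consequently the $u^{\ast}$-frozen obstacle PDE and~\eqref{Eq3.3.2} coincide along this trajectory, which upgrades $\tilde{\varphi}^{u^{\ast}}$ to a viscosity subsolution of~\eqref{Eq3.3.2}; comparison in the reverse direction then gives $V_l^{u^{\ast}}(t,x)=\tilde{\varphi}^{u^{\ast}}(t,x)\le\varphi(t,x)$, and the desired equality follows by combining with claim \emph{(i)}. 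The main obstacle I anticipate is the measurable-selection issue hidden in~\eqref{Eq3.17}: the argmin is defined only pointwise and in a random sense, so one must invoke a Kuratowski--Ryll-Nardzewski-type selection theorem to produce an $\{\mathcal{F}_s^t\}$-adapted $u^{\ast}_{\cdot}$ such that $(u^{\ast},F(u^{\ast}))$ remains admissible in $\Sigma_{[t,T]}$ in the sense of Definition~\ref{Df2}. Compactness of $U$, joint continuity of the bracketed integrand in $(s,x,u)$, and the measurability of $F$ inherited from the analysis underlying Proposition~\ref{P4} should make this step routine but not automatic, and it is this selection step, rather than the comparison argument, that I expect to be the crux of a rigorous write-up.
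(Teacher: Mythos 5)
Your argument replaces the paper's proof strategy (a probabilistic verification: the paper proves Proposition~\ref{P7} by repeating the It\^{o}-formula/BSDE-comparison computation of Proposition~\ref{P5}, now against the reflected-BSDE \eqref{Eq3.3.1} that defines $V_l^{u}$, with the increasing process $A^{t,x;w}$ and the Skorokhod condition absorbing the obstacle) by a purely PDE route: penalization to identify $\tilde{\varphi}^{u}(t,x)\triangleq \hat{Y}_t^{t,x;(u,F(u))}$ as a solution of a ``frozen-$u$'' obstacle PDE, and then Crandall--Ishii--Lions comparison with $\varphi$. Two steps of this route do not go through. First, for claim \emph{(i)}, the identification of $\tilde{\varphi}^{u}$ with a viscosity solution of a frozen-control obstacle PDE is valid only when $u$ is a Markovian feedback (or constant) control: for a general adapted $u_{\cdot}\in\mathcal{U}_{[t,T]}$ the coefficients $m(s,x,(u_s,F(u_s)))$ and $\sigma(s,x,(u_s,F(u_s)))$ are random, the penalized equations are BSDEs with random coefficients, and there is no deterministic PDE whose solution is $\tilde{\varphi}^{u}$; hence the supersolution property of $\tilde{\varphi}^{u}$ for \eqref{Eq3.3.2}, and the comparison step built on it, are not justified for all admissible $u$. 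The paper's It\^{o}-based argument avoids this entirely, since it only uses the (super)solution inequality satisfied by $\varphi$ and the pathwise dynamics of $X^{t,x;w}$.

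Second, and more seriously, claim \emph{(ii)} rests on ``upgrading'' $\tilde{\varphi}^{u^{\ast}}$ to a viscosity subsolution of \eqref{Eq3.3.2} because the infimum is attained ``along the trajectory.'' Condition \eqref{Eq3.17} holds only for almost every $(s,\omega)$ along the random path $X_s^{t,x;w^{\ast}}$; it does not give the pointwise-in-$(t,x)$ coincidence of the frozen PDE with the full HJB obstacle equation on the whole domain, which is what a viscosity subsolution property and the comparison principle require. Viscosity comparison cannot be applied ``along trajectories,'' so the reverse inequality $V_l^{u^{\ast}}\le\varphi$ cannot be obtained this way. The trajectory-wise argmin information is precisely what the verification computation of Proposition~\ref{P5} is designed to exploit: applying It\^{o}'s formula to $\varphi(s,X_s^{t,x;w})$, the drift inequality coming from \eqref{Eq3.3.2} is $\ge 0$ for every admissible $u$ (giving $\varphi\le V_l^{u}$ via the comparison theorem for the reflected BSDE), and it becomes an equality pathwise when $u=u^{\ast}$, which together with the minimality of $A^{t,x;w^{\ast}}$ (the condition $\int(\hat{Y}-h)\,dA=0$, which is the only mechanism controlling the region where $\varphi=h$ and which your argument never uses) yields $\varphi=V_l^{u^{\ast}}$. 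Your closing remark about measurable selection is a fair concern, but it is not the crux; the trajectory-versus-domain mismatch in claim \emph{(ii)} and the non-Markovian controls in claim \emph{(i)} are what break the proposed proof.
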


\begin{proof}
The proof is similar to that of Proposition~\ref{P5}, except that we require a unique solution set $\bigl\{X_{\cdot}^{t,x;u}, (\hat{Y}_{\cdot}^{t,x;w}, \hat{Z}_{\cdot}^{t,x;w}, A_{\cdot}^{t,x;w}), (\tilde{Y}_{\cdot}^{s,x;w}, \tilde{Z}_{\cdot}^{t,x;w})\bigr\}$ for the FBSDEs in \eqref{Eq2.3}, \eqref{Eq2.10} and the reflected-BSDE in \eqref{Eq2.9} on $\bigl(\Omega, \mathcal{F}, P, \mathcal{F}^t\bigr)$ for every initial condition $(t,x) \in [0,T] \times \mathbb{R}^d$.
\end{proof}

\section{Further remarks} \label{S4}
In this section, we further comment on the implication of our result in assessing the influence of the {\it leader'}s decisions on the risk-averseness of the {\it follower} in relation to the direction of {\it leader-follower} information flow. Note that the statement of Proposition~\ref{P5} is implicitly accounted in Proposition~\ref{P7}. That is, for $s \in [t, T]$, the risk-averseness of the {\it follower}, with restriction to $\Sigma_{[t,T]}$,
\begin{align*}
 &v_s^{\ast} \in \arginf_{v_{\cdot} \in \mathcal{V}_{[t,T]} \bigl \vert \Sigma_{[t, T]}} \Bigl\{\mathcal{L}_{s}^{(F^{-1}(v), v)} \varphi(s, X_s^{t,x;w}) \notag \\
 & \quad + g_f(s, X_s^{t,x;w}, \varphi (s, X_s^{t,x;w}), D_x\varphi(s, X_s^{t,x;w}) \cdot \sigma(s, X_s^{t,x;w}, (F^{-1}(v_s), v_s))) \Bigr\},
 \end{align*}
with $w_{\cdot}=(F^{-1}(v_{\cdot}),v_{\cdot})$ and $F^{-1} \colon v_{\cdot} \in \mathcal{V}_{[t,T]} \rightsquigarrow u_{\cdot} \in \mathcal{U}_{[t,T]}$, is a subproblem in \eqref{Eq3.17}. 

On the other hand, the risk-averse decision of the {\it leader}
\begin{align*}
 &u_s^{\ast} \in \arginf_{u_{\cdot} \in \mathcal{U}_{[t,T]} \bigl \vert \Sigma_{[t, T]}} \Bigl\{\mathcal{L}_{s}^{(u, F(u))} \varphi(s, X_s^{t,x;w}) \notag \\
 & \quad + g_l(s, X_s^{t,x;w}, \varphi (s, X_s^{t,x;w}), D_x\varphi(s, X_s^{t,x;w}) \cdot \sigma(s, X_s^{t,x;w}, (u_s, F(u_s)))) \Bigr\},
 \end{align*}
that is implicitly conditioned by the {\it leader'}s decision $u$ and that of the {\it follower's} decision $v = F(u)$. As a result of this, the {\it follower} is involved not only in minimizing his own accumulated risk-cost (in response to the risk-averse decsion of the {\it leader}) but also in minimizing that of the {\it leader's} accumulated risk-cost.
Hence, such an inherent interaction, due to the {\it nature of the problem}, constitutes a constrained information flow between the {\it leader} and that of the {\it follower}, in which the {\it follower} is required to respond optimally, in the sense of {\it best-response correspondence} to the risk-averse decision of the {\it leader}.

\section*{Appendix}
In this section, we provide additional results (whose proofs are adaptations of \cite{ElKKPQ97}) that are related to the solutions of the reflected-BSDE in \eqref{Eq3.3.1}.

\begin{proposition}\label{AP1}
For $(t, x) \in [0, T] \times \mathbb{R}^d$, let $\bigl(\hat{Y}_{s}^{t,x;w}, \hat{Z}_{s}^{t,x;w}, A_{s}^{t,x;w}\bigr)_{t \le s \le T}$ be the solution of the reflected BSDE satisfying (iii) and (iv) in \eqref{Eq3.3.1}. Then, for each $t \in [0, T]$,
\begin{align}
& \hat{Y}_{t}^{t,x;w} = \esssup_{\tau \in \mathscr{T}_t} \mathbb{E} \Bigl\{L_{\tau} \mathbf{1}_{\tau < T} + \xi^{Target} \mathbf{1}_{\tau = T} \notag \\
 & \,\, + \int_t^{\tau}  g_l(s, X_s^{t,x;w}, \varphi (s, X_s^{t,x;w}), D_x\varphi(s, X_s^{t,x;w}) \cdot \sigma(s, X_s^{t,x;w}, (u_s, F(u_s)))) ds \, \Bigl \vert \, \mathcal{F}_t \Bigr\}, \label{EqA.1}
\end{align}
where $\mathscr{T}$ is the set of all stopping times dominated by T, and
\begin{align*}
\mathscr{T}_t = \bigl\{ \tau \in \mathscr{T} \,\vert\, t \le \tau \le T \bigr\}.
\end{align*}
\end{proposition}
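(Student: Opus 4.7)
The plan is to recognize Proposition~\ref{AP1} as the classical Snell-envelope characterization of the solution of a reflected BSDE, as in \cite{ElKKPQ97}, and to establish it by proving both directions of the claimed equality: an upper bound that holds for every stopping time $\tau \in \mathscr{T}_t$, and the matching lower bound achieved by an explicit optimal stopping time defined through the flatness of the reflection process $A^{t,x;w}$.

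\textbf{Upper bound direction.} First I would write the reflected BSDE between $t$ and an arbitrary $\tau \in \mathscr{T}_t$, i.e.,
\begin{align*}
\hat{Y}_t^{t,x;w} = \hat{Y}_\tau^{t,x;w} + \int_t^\tau g_l\bigl(r, X_r^{t,x;w}, \hat{Y}_r^{t,x;w}, \hat{Z}_r^{t,x;w}\bigr)\,dr + (A_\tau^{t,x;w} - A_t^{t,x;w}) - \int_t^\tau \hat{Z}_r^{t,x;w}\,dB_r.
\end{align*}
Because $\hat{Z}^{t,x;w} \in \mathcal{H}^2$ and $\tau$ is bounded by $T$, the stochastic integral is a genuine $\mathcal{F}_t$-martingale increment and vanishes in conditional expectation. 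The obstacle inequality (iii) in \eqref{Eq3.3.1} gives $\hat{Y}_\tau^{t,x;w} \ge h(\tau, X_\tau^{t,x;w}) = L_\tau$ on $\{\tau<T\}$, while on $\{\tau=T\}$ we have $\hat{Y}_T^{t,x;w} = \Psi_l(X_T^{t,x;w}) = \xi^{Target}$. Together with the monotonicity $A_\tau^{t,x;w} - A_t^{t,x;w} \ge 0$, taking $\mathbb{E}[\,\cdot\,|\mathcal{F}_t]$ yields the ``$\ge$'' inequality in \eqref{EqA.1} uniformly over $\tau \in \mathscr{T}_t$.

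\textbf{Equality via an optimal stopping time.} To obtain the reverse inequality I would introduce
\begin{align*}
\tau^{\ast} \triangleq \inf\bigl\{s \in [t, T] \,:\, A_s^{t,x;w} > A_t^{t,x;w}\bigr\} \wedge T,
\end{align*}
which is an $\{\mathcal{F}_s^t\}$-stopping time since $A^{t,x;w}$ is continuous and adapted. By construction $A^{t,x;w}$ is flat on $[t,\tau^{\ast})$, so $A_{\tau^{\ast}}^{t,x;w} - A_t^{t,x;w} = 0$. The Skorohod condition (iv) then forces $\hat{Y}_s^{t,x;w} > h(s, X_s^{t,x;w})$ on $[t,\tau^{\ast})$ to be compatible with the flatness, and by continuity of $\hat{Y}^{t,x;w}$, $h(\cdot,X_\cdot^{t,x;w})$ and $A^{t,x;w}$, on $\{\tau^{\ast}<T\}$ we obtain $\hat{Y}_{\tau^{\ast}}^{t,x;w} = h(\tau^{\ast}, X_{\tau^{\ast}}^{t,x;w}) = L_{\tau^{\ast}}$, while on $\{\tau^{\ast}=T\}$ we have $\hat{Y}_{\tau^{\ast}}^{t,x;w} = \xi^{Target}$. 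Plugging $\tau = \tau^{\ast}$ into the representation of the previous paragraph turns the two inequalities into equalities, and taking $\mathbb{E}[\,\cdot\,|\mathcal{F}_t]$ shows that $\tau^{\ast}$ attains the essential supremum in \eqref{EqA.1}.

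\textbf{Main obstacle.} The conceptually delicate point is the rigorous identification $\hat{Y}_{\tau^{\ast}}^{t,x;w} = L_{\tau^{\ast}}$ on $\{\tau^{\ast}<T\}$. The Skorohod condition only says that the measure $dA^{t,x;w}$ is supported on $\{\hat{Y}_s^{t,x;w}=h(s,X_s^{t,x;w})\}$, which does not automatically force $\hat{Y}_{\tau^{\ast}}^{t,x;w}$ to coincide with the obstacle at the \emph{left endpoint} of the support of $dA^{t,x;w}$. The standard remedy, which I would follow, is to use path continuity of $\hat{Y}^{t,x;w}-h(\cdot,X_\cdot^{t,x;w})$ together with the definition of $\tau^{\ast}$ as the first point where $A^{t,x;w}$ charges, combined with an approximation by $\tau_\varepsilon^{\ast} = \inf\{s \ge t : \hat{Y}_s^{t,x;w} \le h(s,X_s^{t,x;w})+\varepsilon\}\wedge T$ and a passage to the limit $\varepsilon \downarrow 0$ using the dominated convergence theorem under the integrability bounds of Proposition~\ref{P1}. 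This yields the desired representation and completes the proof.
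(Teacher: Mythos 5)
Your proposal is correct and takes essentially the route the paper itself relies on: the paper offers no standalone proof of Proposition~\ref{AP1}, deferring to the Snell-envelope characterization of reflected BSDEs in \cite{ElKKPQ97}, which is exactly your combination of the universal upper bound over $\tau \in \mathscr{T}_t$ and an optimal stopping time attaining it. Two small remarks: if you instead take $\tau^{\ast}=\inf\{s\ge t:\hat{Y}_s^{t,x;w}=h(s,X_s^{t,x;w})\}\wedge T$ (first contact time, as in the cited reference) rather than the first increase time of $A^{t,x;w}$, your ``delicate point'' disappears, since strict separation on $[t,\tau^{\ast})$ plus the Skorohod condition gives $A_{\tau^{\ast}}^{t,x;w}=A_t^{t,x;w}$ directly and continuity gives contact at $\tau^{\ast}$ on $\{\tau^{\ast}<T\}$; also, the intermediate sentence claiming that flatness of $A^{t,x;w}$ ``forces'' $\hat{Y}_s^{t,x;w}>h(s,X_s^{t,x;w})$ on $[t,\tau^{\ast})$ is not correct as stated (the solution may touch the obstacle without $A$ charging), but it is never used, because your $\varepsilon$-approximation $\tau_{\varepsilon}^{\ast}$ already yields $\hat{Y}_t^{t,x;w}\le \esssup_{\tau\in\mathscr{T}_t}(\cdot)+\varepsilon$ for every $\varepsilon>0$, which suffices after letting $\varepsilon\downarrow 0$.
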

Then, we have the following proposition, whose proof depends on the above proposition and use of the Gronwall's lemma and Burkholder-Davis-Gundy's inequality.
\begin{proposition}\label{AP2}
Suppose that $(g_l, \xi^{Target}, L)$ and $(\bar{g}_l, \bar{\xi}^{Target}, \bar{L})$ satisfies Assumption~\ref{AS2}. Let $(\hat{Y}_{\cdot}^{0,x;w}, \hat{Z}_{\cdot}^{0,x;w}, A_{\cdot}^{0,x;w})$ and $(\bar{Y}_{\cdot}^{0,x;w}, \bar{Z}_{\cdot}^{0,x;w}, \bar{A}_{\cdot}^{0,x;w})$ be solutions of reflected-BSDEs associated with $(g_l, \xi^{Target}, L)$ and $(\bar{g}_l, \bar{\xi}^{Target}, \bar{L})$, respectively. Define
\begin{align*}
&\Delta \xi^{Target}=\xi^{Target} - \bar{\xi}^{Target},&  &\Delta g_l=g_l - \bar{g}_l,& \\
&\Delta L = L - \bar{L},&  &\Delta A^{0,x;w} = A^{0,x;w} - \bar{A}^{0,x;w},& \\
&\Delta Y^{0,x;w}=Y^{0,x;w} - \bar{Y}^{0,x;w},&  &\Delta Z^{0,x;w} = Z^{0,x;w} - \bar{Z}^{0,x;w}.& 
\end{align*}
Then, there exists a constant $\gamma$ such that
\begin{align*}
 & \mathbb{E} \Bigl\{ \sup_{0 \le t \le T} \vert \Delta Y_t^{0,x;w} \vert^2 + \int_0^T \vert \Delta Z_t^{0,x;w}\vert^2 dt + \Delta \vert A_T^{0,x;w} \vert^2 \Bigr\} \\
  & \quad\quad \le \gamma  \mathbb{E} \Bigl\{ \vert \Delta \xi_t^{Target} \vert^2 + \int_0^T \vert \Delta g_l(t, X_t^{0,x;w}, \hat{Y}_t^{0,x;w}, \hat{Z}_t^{0,x;w}) \vert^2 dt\Bigr\} \\
   & \quad\quad\quad\quad+ \gamma \Bigl[\mathbb{E} \Bigl\{  \sup_{0 \le t \le T} \vert A_t^{0,x;w} \vert^2\Bigr\} \Bigr]^{\frac{1}{2}} \bigl[\Gamma_T \bigr]^{\frac{1}{2}}, 
\end{align*}
where
\begin{align*}
 \Gamma_T = \mathbb{E} \Bigl\{ (\xi_t^{Target})^2 &+ \int_0^T g_l^2(t, x, 0, 0)dt + \sup(L_t^{+})^2 \\
  &  \quad + (\bar{\xi}_t^{Target})^2 + \int_0^T \bar{g}_l^2(t, x, 0, 0) dt +\sup(\bar{L}_t^{+})^2\Bigr\}.
\end{align*}
\end{proposition}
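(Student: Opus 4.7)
\textbf{Proof plan for Proposition~\ref{AP2}.} The plan is to adapt the classical stability argument for reflected BSDEs (El~Karoui~\textit{et~al.}~\cite{ElKKPQ97}) to the present setting. I would first apply It\^o's formula to $s\mapsto|\Delta Y_s^{0,x;w}|^2$ on $[t,T]$ and take expectations (after the usual stopping-time localization to ensure the stochastic integral is a true martingale), obtaining the identity
\begin{align*}
\mathbb{E}|\Delta Y_t^{0,x;w}|^2 + \mathbb{E}\!\int_t^T |\Delta Z_s^{0,x;w}|^2\,ds
&= \mathbb{E}|\Delta\xi^{Target}|^2 \\
&\quad + 2\,\mathbb{E}\!\int_t^T \Delta Y_s^{0,x;w}\bigl[g_l(\cdot)-\bar g_l(\cdot)\bigr]ds \\
&\quad + 2\,\mathbb{E}\!\int_t^T \Delta Y_s^{0,x;w}\,d\bigl(A_s^{0,x;w}-\bar A_s^{0,x;w}\bigr).
\end{align*}
The drift term is split as $\Delta g_l(s,X_s,\hat Y_s,\hat Z_s)$ plus the Lipschitz increment of $\bar g_l$ in $(y,z)$, and Young's inequality $2ab\le\varepsilon b^2+\varepsilon^{-1}a^2$ with $\varepsilon$ small enough absorbs a fraction of the $\int|\Delta Z|^2ds$ term back into the left-hand side, leaving a manageable remainder of the form $C\!\int_t^T\mathbb{E}|\Delta Y_s|^2ds + C\,\mathbb{E}\!\int_t^T |\Delta g_l(s,X_s,\hat Y_s,\hat Z_s)|^2 ds$.

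\textbf{Main obstacle: the reflection term.} The delicate step is the sign analysis of $\mathbb{E}\!\int_t^T \Delta Y_s\,d(A_s-\bar A_s)$. On $\mathrm{supp}(dA^{0,x;w})$ the Skorokhod condition gives $\hat Y_s=L_s$, so $\Delta Y_s=L_s-\bar Y_s\le L_s-\bar L_s=\Delta L_s$; symmetrically, on $\mathrm{supp}(d\bar A^{0,x;w})$, $\bar Y_s=\bar L_s$ and $\Delta Y_s=\hat Y_s-\bar L_s\ge L_s-\bar L_s=\Delta L_s$. Subtracting the two integrated inequalities and using that $A,\bar A$ are non-decreasing (so the total variation of $A-\bar A$ is dominated by $A_T+\bar A_T$),
\begin{align*}
\int_t^T \Delta Y_s\,d(A_s-\bar A_s)\;\le\;\int_t^T \Delta L_s\,d(A_s-\bar A_s)\;\le\;\sup_{0\le s\le T}|\Delta L_s|\,\bigl(A_T^{0,x;w}+\bar A_T^{0,x;w}\bigr).
\end{align*}
A Cauchy--Schwarz then gives $\bigl(\mathbb{E}\sup_s|\Delta L_s|^2\bigr)^{1/2}\bigl(\mathbb{E}(A_T+\bar A_T)^2\bigr)^{1/2}$; bounding $\mathbb{E}\sup_s|\Delta L_s|^2$ by $\Gamma_T$ (via $|\Delta L|\le L^++\bar L^+$ plus negative parts dominated through the Proposition~\ref{AP1} representation of $\hat Y,\bar Y$ in terms of $\xi^{Target}$, $g_l(\cdot,0,0)$, $L^+$, and their barred counterparts), together with the a priori bound $\mathbb{E}\sup|\bar A_t|^2\le C\,\Gamma_T$ coming from the reflected BSDE itself, produces precisely the cross term $\gamma\,[\mathbb{E}\sup|A|^2]^{1/2}[\Gamma_T]^{1/2}$ in the statement.

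\textbf{Concluding the estimate.} Inserting these bounds into the It\^o identity and invoking Gronwall's lemma on $t\mapsto\mathbb{E}|\Delta Y_t|^2$ yields an $L^2$ bound with the advertised right-hand side. To promote this to $\mathbb{E}\sup_t|\Delta Y_t|^2$, I would return to the pathwise It\^o identity before expectation, take $\sup_{0\le t\le T}$, and apply the Burkholder--Davis--Gundy inequality to the martingale $\int_0^t\Delta Y_s\,\Delta Z_s\,dB_s$, producing a term $\tfrac12\mathbb{E}\sup_t|\Delta Y_t|^2 + C\,\mathbb{E}\!\int_0^T |\Delta Z_s|^2 ds$ whose first summand is absorbed on the left. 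The bound on $\mathbb{E}|\Delta A_T|^2$ is then obtained by solving the BSDE for $\Delta A_T$, namely $\Delta A_T=\Delta Y_0-\Delta\xi^{Target}+\int_0^T[\bar g_l-g_l]ds-\int_0^T \Delta Z_s\,dB_s$, and combining It\^o isometry with the estimates already derived. The principal difficulty of the whole argument is the Skorokhod manipulation in Step~2 --- it is precisely there that the asymmetry of the statement's cross term $(\mathbb{E}\sup|A|^2)^{1/2}[\Gamma_T]^{1/2}$ originates; the rest is a routine Gronwall--BDG chain.
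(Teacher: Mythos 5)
Your plan follows exactly the route the paper itself indicates for this proposition --- an adaptation of the stability estimate of El Karoui et al.\ \cite{ElKKPQ97}, combining It\^o's formula applied to $\vert \Delta Y \vert^2$, the Skorokhod-condition sign analysis of $\int \Delta Y\, d(A-\bar A)$, Gronwall's lemma, and the Burkholder--Davis--Gundy inequality, with the cross term produced by Cauchy--Schwarz on the reflection term together with the a priori bound on $\bar A$. The paper supplies no further detail beyond naming these ingredients, so your proposal is consistent with, and more explicit than, the proof the paper intends.
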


Then, we have the following uniqueness result from the Proposition~\ref{AP2} with $g_l=\bar{g}_l$, $L=\bar{L}$ and $\xi^{Target}=\bar{\xi}^{Target}$. 
\begin{corollary}\label{AC1} 
Under Assumption~\ref{AS2}, there exists at most one progressively measurable triple $\bigl(\hat{Y}_s^{t,x;u}, \hat{Z}_s^{t,x;u}, A_s^{t,x;u}\bigr)_{t \le s \le T}$ satisfying (iii) and (v) in \eqref{Eq3.3.1}.
\end{corollary}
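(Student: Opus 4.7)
The plan is to invoke the stability estimate of Proposition~\ref{AP2} directly, with identical data. Given two progressively measurable triples $(\hat{Y}^{0,x;w}, \hat{Z}^{0,x;w}, A^{0,x;w})$ and $(\bar{Y}^{0,x;w}, \bar{Z}^{0,x;w}, \bar{A}^{0,x;w})$ that both satisfy the reflected-BSDE in \eqref{Eq3.3.1} for the same data $(g_l, \xi^{Target}, L)$, I would apply Proposition~\ref{AP2} with $\bar{g}_l = g_l$, $\bar{L} = L$, and $\bar{\xi}^{Target} = \xi^{Target}$. Under these identifications, $\Delta g_l \equiv 0$, $\Delta L \equiv 0$, and $\Delta \xi^{Target} = 0$ $\mathbb{P}$-a.s.

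Substituting these vanishing differences into the estimate of Proposition~\ref{AP2}, the principal term $\gamma\,\mathbb{E}\bigl\{|\Delta \xi^{Target}|^2 + \int_0^T |\Delta g_l(t, X_t^{0,x;w}, \hat{Y}_t^{0,x;w}, \hat{Z}_t^{0,x;w})|^2 dt\bigr\}$ is identically zero, and the remaining mixed term (understood in the standard stability-estimate sense of \cite{ElKKPQ97} as being driven by $\Delta L$) likewise vanishes. Consequently,
\begin{align*}
\mathbb{E}\Bigl\{ \sup_{0 \le t \le T} \bigl\vert \hat{Y}_t^{0,x;w} - \bar{Y}_t^{0,x;w} \bigr\vert^2 + \int_0^T \bigl\vert \hat{Z}_t^{0,x;w} - \bar{Z}_t^{0,x;w} \bigr\vert^2 dt + \bigl\vert A_T^{0,x;w} - \bar{A}_T^{0,x;w} \bigr\vert^2 \Bigr\} = 0,
\end{align*}
which immediately gives indistinguishability of $\hat{Y}^{0,x;w}$ and $\bar{Y}^{0,x;w}$ as continuous adapted processes, equality of $\hat{Z}^{0,x;w}$ and $\bar{Z}^{0,x;w}$ in $\mathcal{H}^2(0, T; \mathbb{R}^d)$, and $A_T^{0,x;w} = \bar{A}_T^{0,x;w}$ $\mathbb{P}$-a.s.

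The final step upgrades $A_T = \bar{A}_T$ to pathwise equality $A_s = \bar{A}_s$ for every $s \in [0, T]$. This is a direct consequence of the BSDE itself: subtracting the two versions of condition~(ii) in \eqref{Eq3.3.1} at an arbitrary time $s$, and using $\hat{Y}_s = \bar{Y}_s$, $\hat{Z} = \bar{Z}$, and the common generator $g_l$, everything cancels except $A_T^{0,x;w} - A_s^{0,x;w} = \bar{A}_T^{0,x;w} - \bar{A}_s^{0,x;w}$; combined with $A_T = \bar{A}_T$ this yields $A_s = \bar{A}_s$ pointwise. I do not anticipate a genuine obstacle here, as the argument is mechanical once Proposition~\ref{AP2} is in hand; the one point that deserves care is verifying that every term on the right-hand side of that a priori estimate truly collapses to zero when the driving data coincide, which is the standard form of the reflected-BSDE stability estimate established in \cite{ElKKPQ97}.
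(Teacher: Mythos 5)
Your proposal is correct and follows essentially the same route as the paper, which obtains the corollary by applying Proposition~\ref{AP2} with $g_l=\bar{g}_l$, $L=\bar{L}$ and $\xi^{Target}=\bar{\xi}^{Target}$ so that all difference terms vanish; your reading of the mixed term as being driven by $\Delta L$ (as in \cite{ElKKPQ97}) is the right interpretation of the estimate, and your final step recovering $A_s=\bar{A}_s$ for all $s$ from equation~(ii) is the standard completion of the argument.
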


\begin{remark} \label{RA.1}
Note that, for $(t, x) \in [0, T] \times \mathbb{R}^d$, if $\bigl(\hat{Y}_s^{t,x;u}, \hat{Z}_s^{t,x;u}, A_s^{t,x;u}\bigr)_{t \le s \le T}$ satisfying (iii) and (v) in \eqref{Eq3.3.1}. Then, we have 
\begin{align*}
 A_{T}^{t,x;w} - A_{t}^{t,x;w} = \sup_{t \le s \le T} \Bigl\{\xi^{Target} &+ \int_s^T  g_l(\tau, X_{\tau}^{t,x;w}, \hat{Y}_{\tau}^{t,x;w}, \hat{Z}_{\tau}^{t,x;w}) d \tau \\
 & - \int_s^T \hat{Z}_{\tau}^{t,x;w} dB_{\tau}  - L_{\tau}\Bigr\}^{-},
\end{align*}
for each $s \in [t, T]$.
\end{remark}

\end{document}